\documentclass[a4paper,english]{article}
%
%
%
%%% encodage + typo
\usepackage[T1]{fontenc}% encodage 8-bits + lettre accentuée en vectorielle;
\usepackage{textcomp}% améliore certains symboles de bases
\usepackage{lmodern}% remplace la police ComputerModern par LatinModern (+ mieux bien)
\usepackage[utf8]{inputenc}% lettres accentuées tapée directement
%%% babel
\usepackage{babel}% pour les césures automatiques, etc.
%
%
%
%%% math
%\usepackage{amsmath}
\usepackage{mathtools}% compléments à amsmath
\usepackage{amssymb}%symboles mathématiques supplémentaires, notamment pour la flèche double
\usepackage{amsthm}% environnement ams-theorem
%
%
%
% style de théorème
\newtheorem{lemm}{Lemma}[section]
\newtheorem{coro}[lemm]{Corollary}
\newtheorem{prop}[lemm]{Proposition}
\newtheorem{theo}[lemm]{Theorem}
\newtheorem{defi}[lemm]{Definition}
\newtheorem{conj}[lemm]{Conjecture}
%
%
%
%%% racourcis et nouvelles commandes
%%% Opérateurs mathématiques (écrit en roman)
\DeclareMathOperator{\cayl}{Cay}% Cayley
\DeclareMathOperator{\lcm}{lcm}
\DeclareMathOperator{\sq}{sq}% square function
\DeclareMathOperator{\triangles}{N}% triangles
%%% Délimiteurs avec ajustement de taille automatique
\DeclarePairedDelimiter{\abs}{\lvert}{\rvert}%  valeurs absolues
%%% Nouvelles commandes
\newcommand*{\DRR}{DRR}%DRR
\newcommand*{\GRR}{GRR}%GRR
\newcommand*{\orCayley}[2]{\overrightarrow\cayl( #1,#2)}% Cayley
\newcommand*{\ORR}{ORR}%GRR
\newcommand*{\presentation}[2]{\langle#1\mid#2\rangle}% presentation
\newcommand*{\setst}[2]{\{#1 \mid #2\}}%  set 1, such that 2
\newcommand*{\Triangles}[2]{\triangles_3(#1,#2)} %triangles
\newcommand*{\unCayley}[2]{\cayl( #1,#2)}% Cayley
\newcommand*{\Z}{\mathbf{Z}}% Z
\newcommand*{\N}{\mathbf{N}}% Z
%
%
%
%%% hyperliens PDF
\usepackage[colorlinks,breaklinks,bookmarks,plainpages=false,unicode=true]{hyperref} % PDF hyperliens : coloriés; sur plusieurs lignes; signet; vrai numéro (pas arabe)
%plainpage=false : les pages sont désignée par leur vrai numéro et non pas par un numéro en chiffre arabe
%
%
%
%%% titre et autres informations
\title{Cayley graphs with few automorphisms: the case of infinite groups}
\author{Paul-Henry Leemann, Mikael de la Salle}
\date{\today}
\hypersetup{pdftitle={Cayley graphs with few automorphisms}, pdfauthor={Paul-Henry Leemann, Mikael de la Salle}, pdfsubject={\GRR, \DRR{} and \ORR{} for infinite groups.},pdfkeywords={\GRR, \DRR, \ORR, Cayley graph, automorphisms of graphs, generalized dihedral group, generalized dicyclic group, regular automorphism group}}
%
%
%
%
%
%%% Margin notes. With box in color and arrow to show the place
\usepackage[colorinlistoftodos,textsize=footnotesize]{todonotes}% disable= pas de notes

\begin{document}
\maketitle
%
%
%
%
%
%
%
%
%
%
%%%%%%%%%%%%%%%%%%%%%%%%%%%%%%%%%%%%%%%% Abstract %%%%%%%%%%%%%%%%%%%%%%%%%%%%%%%%%%%%%%%%
\begin{abstract}
We characterize the finitely generated groups that admit a Cayley graph whose only automorphisms are the translations, confirming a conjecture by Watkins from 1976. The proof relies on random walk techniques. As a consequence, every finitely generated group admits a Cayley graph with countable automorphism group. We also treat the case of directed graphs.
\end{abstract}
\section{Introduction}
Given a group $G$ and a symmetric generating set $S \subseteq G\setminus\{1\}$, the Cayley graph $\unCayley{G}{S}$ is the simple unoriented graph with vertex set $G$ and an edge between $g$ and $h$ precisely when $g^{-1}h \in S$. By construction, the action by left-multiplication of $G$ on itself induces an action of the group on its Cayley graph, which is free and vertex-transitive.

We are here interested in the question to decide when $S$ can be chosen so that these are all the automorphisms of $\unCayley{G}{S}$, or equivalently the automorphism group of the graph acts freely and transitively on the vertex set. The main result of this paper is
\begin{theo}\label{thm:main} Every finitely generated group $G$ that is not virtually abelian admits a finite degree Cayley graph whose automorphism group is not larger than $G$ acting by left-translation.
\end{theo}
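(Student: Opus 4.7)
The plan is to construct a finite symmetric generating set $S$ such that every graph automorphism of $\unCayley{G}{S}$ fixing the identity is trivial; combined with the free transitive action by left-translations, this gives $\unCayley{G}{S}$ the desired \GRR{} property. Since an automorphism $\phi$ with $\phi(1)=1$ permutes the neighborhood $S$ of $1$, the crux is to force $\phi|_S=\mathrm{id}$, after which an induction on word length in $S$ (using the rigidity of balls of increasing radius, once their center is fixed and their boundary behavior controlled) propagates triviality to all of $G$.

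To force $\phi$ to fix each $s\in S$ pointwise, I would attach to every $s\in S$ a numerical invariant of the rooted edge $(1,s)$ that any graph automorphism must preserve, and arrange that these invariants separate the elements of $S$. Two flavors of invariant are available. Local combinatorial ones, such as the number of triangles $\Triangles{G}{S}$ through the edge $(1,s)$, or more generally the isomorphism class of a ball of bounded radius around $(1,s)$, give strong but finite-range information. Asymptotic ones coming from random walks — for instance, the return probability $p_n^{s}(1)$ of the simple random walk on $\unCayley{G}{S}$ conditioned to take its first step along $s$, or the joint distribution of $(W_n,W_{n+1},\ldots,W_{n+k})$ on small balls — supply the global information that a purely combinatorial invariant cannot capture in the infinite setting. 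Choosing $S$ so that every $s\in S$ has a unique combined signature is then enough.

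The non-virtually-abelian hypothesis is decisive in this construction. In virtually abelian groups, inversion and the large group of automorphisms of $\Z^d$ produce unavoidable extra graph symmetries, and random walks are too diffusive to distinguish generators by their signature. For non-virtually-abelian $G$, the random walk has a strictly richer asymptotic behavior (non-trivial drift, non-commutative increments, a larger space of conditioned walk distributions), which gives enough freedom to realize all signatures distinctly. Concretely, I would start from any finite symmetric generating set $S_0$, then adjoin carefully chosen auxiliary elements — for instance, long words in two non-commuting elements — and argue by a probabilistic or Baire-category argument that a generic such enlargement produces pairwise distinct random walk signatures on $S$.

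The hardest step, I anticipate, will be establishing the separation of random walk invariants uniformly across the class of non-virtually-abelian groups. Groups with a non-abelian free subgroup are comfortable because one can import sharp large deviation estimates, and groups of polynomial growth reduce, by Gromov's theorem, to the virtually nilpotent non-virtually-abelian case where the non-commutativity can be exploited in the second-order asymptotics of the walk. The genuine difficulty lies in amenable groups of intermediate or exponential growth whose walks are close to diffusive: there one must quantify precisely how the non-commutativity of $G$ perturbs conditional return probabilities enough to distinguish generators, probably via a structural dichotomy and a delicate comparison of walks conditioned on different first steps.
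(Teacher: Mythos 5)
There is a genuine gap, and it sits at the very first step of your plan. You want to force $\phi|_S=\mathrm{id}$ by attaching to each $s\in S$ an invariant of the unoriented rooted edge $\{1,s\}$ (triangle counts, ball types, conditioned return probabilities). But every such invariant is automatically symmetric under $s\mapsto s^{-1}$, since $\{1,s\}$ and $\{1,s^{-1}\}$ are exchanged by the graph automorphism $g \mapsto sg$ combined with relabelling --- the paper notes explicitly that $\Triangles{s}{S}=\Triangles{s^{-1}}{S}$, and the same holds for your random-walk signatures because the walk is symmetric. So the most any separation of invariants can give you is that $\phi$ preserves the \emph{colour} $\{s,s^{-1}\}$ of each edge, never that it fixes $S$ pointwise. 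This is not a technicality: the inversion map $g\mapsto g^{-1}$ on an abelian group, or the analogous map on a generalized dicyclic group, preserves all colours of every Cayley graph, so colour-preservation alone cannot imply that $\phi$ is a translation, and your ``induction on word length'' propagation step fails exactly there. The paper needs a second, independent mechanism --- \emph{orientation-rigidity} of a pair $(G,S_1)$ with $S_1=(S_0\cup S_0^2\cup S_0^3)\setminus\{1\}$, imported from the earlier work \cite{LdlS2020} --- to upgrade colour-preserving permutations to translations; this is precisely where the abelian and generalized dicyclic obstructions are excluded, and it is entirely absent from your proposal.

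The second gap is that your use of random walks is speculative where the actual difficulty lies. The paper does not use random-walk quantities as graph invariants at all (only the finite-range triangle count $\Triangles{s}{\tilde S}$ plays that role); random walks enter solely as a tool to prove a purely group-theoretic statement, namely that a finitely generated non-virtually-abelian group cannot be covered by $\sq^{-1}(F)\cup s\,\sq^{-1}(F)$ together with finitely many cosets of infinite-index subgroups (Proposition~\ref{Prop:ultimate} via Lemma~\ref{lemma:Mikael2}), the key input being Tointon's theorem that the commuting probability of two independent walks tends to $0$ when $G$ is not virtually abelian. That covering statement is what guarantees one can adjoin a quadruple $\{g,g^{-1},s_0^{-1}g,g^{-1}s_0\}$ creating a \emph{controlled} number of new triangles on the edge $s_0$ and none elsewhere in $S$, so that the counts $\Triangles{s}{\tilde S}$ can be made pairwise distinct by finitely many augmentations. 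Your proposal replaces this with an unspecified ``separation of asymptotic signatures'' for which no mechanism is given, and which you yourself flag as the hardest step; as written, nothing in the non-virtually-abelian hypothesis is actually brought to bear on the construction.
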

A Cayley graph of $G$ whose automorphism group acts freely on its vertex set is called a \emph{graphical regular representation}, or \GRR. 

The main result in \cite{LdlS2020} is similar to Theorem~\ref{thm:main}, but with the assumption of not being virtually abelian replaced by having an element of infinite (or sufficiently large) order and being non-abelian and non-generalized dicyclic (see section~\ref{sec:reminders} for definitions). Combining both results, we obtain that the infinite finitely generated groups that do not admit a \GRR{} are precisely the abelian and the generalized dicyclic groups.

Together with the results from \cite{MR0255446,MR0295919,MR0280416,MR0319804,MR0363998,MR0344157,MR0392660,MR0457275,HetzelThese,MR642043} that treated the case of finite groups\footnote{We refer to the introduction of \cite{LdlS2020} and the references therein for a more detailed exposition of the work on finite groups}, we obtain the following result. The equivalence between \ref{item:GRR} and \ref{item:Gnonexceptionnal} confirms Watkin's conjecture \cite{MR0422076}.
\begin{coro}\label{cor:main} For a finitely generated group $G$, the following are equivalent:
  \begin{enumerate}
  \item\label{item:GRR} $G$ admits a \GRR,
  \item\label{item:locFiniteGRR} $G$ admits a finite degree \GRR,
  \item\label{item:Gnonexceptionnal} $G$ does not belong to the following list:
  \begin{itemize}
  \item the non-trivial abelian groups different from $\Z/2\Z$ and $(\Z/2\Z)^n$ for $n \geq 5$,
  \item the generalized dicyclic groups,
  \item the following $10$ finite groups of cardinality at most
    $32$\footnote{with GAP IDs respectively [6,1], [8,3], [10,1], [12,3]
    [24,11], [32,26], [16,13], [16,6], [18,4], [27,3]. The first digit in the GAP ID is the order of the group, and the second is the label of the group in GAP's numbering of groups of that order. For example, the last group in the list is of order 27, and is the third in GAP's list of groups of order 27. It is also isomorphic to the free Burnside group $B(2,3)$.}: the dihedral
    groups of order $6, 8, 10$, the alternating group $A_4$, the
    products $Q_8\times \Z/3\Z$ and $Q_8\times \Z/4\Z$ (for $Q_8$ the
    quaternion group) and the four groups given by the presentations
\[ \presentation{a,b,c}{a^2=b^2=c^2=1, abc=bca = cab},\]
\[ \presentation{a,b}{a^8=b^2=1, b^{-1} a b =a^5},\]
\[ \presentation{a,b,c}{a^3=b^3=c^2=(ac)^2=(bc)^2=1, ab=ba},\]
\[ \presentation{a,b,c}{a^3=b^3=c^3=1, ac=ca, bc=cb,b^{-1} a b = ac}.\]
%% Gap code to compute the order of the above groups:
%% f := FreeGroup( "a", "b", "c");;
%% g1:= f/[f.1^2, f.2^2, f.3^2,f.1*f.2*f.3*f.1*f.3*f.2,f.1*f.2*f.3*f.2*f.1*f.3];;
%% g2:= f/[f.1^8, f.2^2, f.3,f.2^-1*f.1*f.2*f.a^-5];;
%% g3:= f/[f.1^3,f.2^3,f.3^2,f.1*f.3*f.1*f.3,f.2*f.3*f.2*f.3,f.1*f.2*f.1^-1*f.2^-1];;
%% g4:= f/[f.1^3,f.2^3,f.3^3,f.1*f.3*f.1^-1*f.3^-1,f.2*f.3*f.2^-1*f.3^-1,f.2^-1*f.1*f.2*f.3^-1*f.1^-1];;
%% IdGroup(g_1);
%% IdGroup(g_2);
%% IdGroup(g_3);
%% IdGroup(g_4);

  \end{itemize}
  \end{enumerate}
\end{coro}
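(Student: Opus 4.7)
The strategy is to combine Theorem~\ref{thm:main} with the previously known classification for finite groups \cite{MR0255446,MR0295919,MR0280416,MR0319804,MR0363998,MR0344157,MR0392660,MR0457275,HetzelThese,MR642043} and with the main theorem of \cite{LdlS2020}, which covers infinite groups possessing an element of large enough order. The implication \ref{item:locFiniteGRR}$\Rightarrow$\ref{item:GRR} is immediate from the definitions, so only the other two implications require attention.

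For \ref{item:GRR}$\Rightarrow$\ref{item:Gnonexceptionnal} I would argue the contrapositive, exhibiting a non-translation automorphism for every group in the list. If $G$ is abelian and not of exponent $2$, the inversion map $g\mapsto g^{-1}$ is a non-identity group automorphism that stabilizes every symmetric set and fixes $1$, hence acts as a non-translation graph automorphism on every Cayley graph; the generalized dicyclic groups admit an analogous canonical involution explained in \cite{LdlS2020}. The remaining abelian exceptions $(\Z/2\Z)^n$ with $2\leq n\leq 4$ and the ten sporadic finite exceptions are all ruled out in the finite-group references.

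For \ref{item:Gnonexceptionnal}$\Rightarrow$\ref{item:locFiniteGRR} I would split on whether $G$ is finite or infinite. In the finite non-exceptional case the finite-group papers actually construct a \GRR, which is automatically of finite degree. Assume then that $G$ is infinite. If $G$ is not virtually abelian, Theorem~\ref{thm:main} directly supplies the desired finite-degree \GRR. Otherwise $G$ is virtually abelian and, being infinite and finitely generated, is virtually $\Z^n$ for some $n\geq 1$, so it contains an element of infinite order; since $G$ is by assumption neither abelian nor generalized dicyclic, the main theorem of \cite{LdlS2020} applies and produces the required finite-degree \GRR.

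The genuine conceptual hurdle that Theorem~\ref{thm:main} solves is exactly the case of infinite non-virtually-abelian groups in which every element has small order, such as free Burnside groups of large odd exponent. These were inaccessible to the order-based arguments of \cite{LdlS2020}, and the random-walk method developed in the present paper is designed precisely to handle them. Once Theorem~\ref{thm:main} is granted, the corollary is essentially a bookkeeping exercise, matching each non-exceptional group to one of the three existing theorems and each exceptional group to one of the standard obstructions.
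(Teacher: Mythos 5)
Your proposal is correct and follows essentially the same route as the paper: handle the finite case by the known classification, obstruct abelian/generalized dicyclic groups via the inversion-type automorphism, and for infinite non-exceptional groups split on virtual abelianness, invoking Theorem~\ref{thm:main} in one case and the main theorem of \cite{LdlS2020} (applicable since an infinite finitely generated virtually abelian group has an element of infinite order) in the other. No gaps.
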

%Corollary~\ref{cor:main} in particular contains a positive answer (for finitely generated groups) to a conjecture of Babai \cite{MR511746} that, for any infinite non-abelian and non-generalized dicyclic group $G$, there is a symmetric subset $S \subset G$ such that the only group automorphism $\phi$ of $G$ such that $\phi(S) = S$ is the identity\footnote{With the terminology of \cite{MR0280416}, \emph{infinite Class II groups are either abelian or generalized dicyclic}}. It even implies that the subset $S$ can be taken to be finite and generating (as such a group automorphism of $G$ induces a graph automorphism of $\unCayley{G}{S}$ fixing the identity).

\subsection{Consequences}
Another consequence of our result is the following fact, which was a motivation of the second-named author for \cite{LdlS2020} and the present work, see \cite{delaSalleTessera}. This solves a conjecture raised in \cite{delaSalleTessera}.
\begin{coro}\label{cor:discrete_automorphism_group} Every finitely generated group admits a  finite degree Cayley graph whose automorphism group is countable.
\end{coro}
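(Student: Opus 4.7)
The argument splits on whether $G$ is virtually abelian.

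If $G$ is not virtually abelian, Theorem~\ref{thm:main} applies directly: it yields a finite-degree \GRR{} of $G$, whose automorphism group is just $G$ acting by left-translation, and this is countable because $G$ is finitely generated.

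Suppose now that $G$ is virtually abelian. If $G$ is finite, any Cayley graph has finite automorphism group and there is nothing to do. Otherwise $G$ is infinite with polynomial growth and contains a finite-index normal subgroup $A\cong\Z^n\times F$ with $F$ finite abelian and $n\ge 1$. The plan is to build a finite symmetric generating set $S$ out of the standard generators $\pm e_i$ of $\Z^n$, the non-identity elements of $F$, and a symmetric set of coset representatives of $G/A$; for such a choice the Cayley graph $\unCayley{G}{S}$ has a product-like structure and one expects to decompose $\mathrm{Aut}(\unCayley{G}{S})=G\rtimes K$ with $K$ the stabilizer of the identity vertex.

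The main obstacle is showing that $K$ is finite. When $G$ is abelian one argues directly: combinatorial constraints coming from short closed walks in $\unCayley{G}{S}$ force any automorphism fixing $1$ to preserve the clique on $F\setminus\{1\}$ and to act as a signed permutation on the $\Z^n$-generators, so it extends to an automorphism of $G$ preserving $S$, of which there are only finitely many. When $G$ is virtually abelian but non-abelian (most notably the generalized dicyclic case) I would reduce to the abelian situation by analysing the induced action of $K$ on the finitely many cosets of $A$ in $G$, or alternatively appeal to general structural results on automorphism groups of locally finite vertex-transitive graphs of polynomial growth. Once $K$ is proven finite, $\abs{\mathrm{Aut}(\unCayley{G}{S})}=\abs{G}\cdot\abs{K}$ is countable, completing the proof.
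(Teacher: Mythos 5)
Your first case is exactly the paper's: for $G$ not virtually abelian, Theorem~\ref{thm:main} gives a finite degree \GRR{} and the automorphism group is $G$ itself, hence countable. The divergence, and the problem, is in the virtually abelian case. The paper does not construct anything there: it observes that an infinite finitely generated virtually abelian group has an element of infinite order and invokes Theorem~J of \cite{delaSalleTessera}, which is a substantial standalone result. Your sketch tries to redo that work in a paragraph and leaves the only hard step unproved, namely the finiteness of the vertex stabilizer $K$. The phrases ``one expects to decompose'', ``I would reduce to the abelian situation'', and ``alternatively appeal to general structural results'' are precisely where the content should be. In particular, the proposed fallback is false as stated: locally finite vertex-transitive graphs of polynomial growth, and even Cayley graphs of finitely generated \emph{abelian} groups, can have uncountable automorphism groups, so no general structural result will do this for you and the generating set must be chosen with care. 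A concrete example is $G=\Z\times\Z/2\Z$ with $S=\{(\pm1,0),(0,1),(\pm1,1)\}$: the graph $\unCayley{G}{S}$ is the bi-infinite path with each vertex doubled and consecutive pairs completely joined, so the two vertices of each pair can be swapped independently and the automorphism group contains $(\Z/2\Z)^{\Z}$.

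Your abelian subcase can be salvaged for the specific generating set you name, since $\unCayley{\Z^n\times F}{\{\pm e_i\}\cup(F\setminus\{1\})}$ is a Cartesian product of $n$ bi-infinite paths and a complete graph, and the Sabidussi--Vizing prime factorization theorem pins down its automorphism group as $(D_\infty\wr S_n)\times \mathrm{Sym}(F)$, which has finite vertex stabilizers; but you would need to say this. The genuinely non-abelian virtually abelian case (generalized dicyclic groups, infinite dihedral-like groups, etc.) is not a routine reduction to the abelian one: an automorphism fixing the identity has no a priori reason to preserve the cosets of $A$, and controlling this is essentially the content of Theorem~J of \cite{delaSalleTessera}. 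As written, the proposal has a genuine gap here; the clean fix is to argue as the paper does, i.e.\ note that $G$ has an element of infinite order and cite that theorem.
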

Recall the classical fact that the topology of pointwise convergence on vertices turns the automorphism group of a finite degree graph into a locally compact metrizable group: a sequence $(\phi_n)_n$ of automorphisms converges to $\phi$ if and only if for every $x$, $\phi_n(x) = \phi(x)$ for all but finitely many $n$. For this topology the stabilizer of a vertex is a compact subgroup, and therefore either finite or uncountable. Therefore Corollary \eqref{cor:discrete_automorphism_group} can be equivalently phrased as \emph{Every finitely generated group admits a finite degree Cayley graph whose automorphism group is discrete} (equivalently \emph{has finite stabilizers}).

Corollary \eqref{cor:discrete_automorphism_group} has interesting graph-theoretical consequences, that we now explain. Following \cite{MR3156647,MR3658820,delaSalleTessera}, given two graphs $X$ and $Y$ and a positive integer~$R$, we say that $Y$ is \emph{$R$-locally $X$} if every ball of radius $R$ in $Y$ appears as a ball of radius $R$ in $X$. A graph $X$ is \emph{local to global rigid} (LG-rigid) if there is $R>0$ such that any graph that is $R$-locally $X$ is covered by $X$. 
\begin{coro}\label{cor:LGrigid} Every finitely presented group admits a finite degree Cayley graph that is $LG$-rigid.
\end{coro}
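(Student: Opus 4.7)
The plan is to combine Corollary~\ref{cor:discrete_automorphism_group} with a standard local-to-global argument for Cayley graphs of finitely presented groups.

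First I would apply Corollary~\ref{cor:discrete_automorphism_group} to obtain a finite degree Cayley graph $X = \unCayley{G}{S}$ whose automorphism group $\Gamma$ is discrete. Since $X$ is locally finite and $\Gamma$ is discrete, the pointwise stabilizers of the balls $B_X(1_G,r)$ in $\Gamma$ are finite and their intersection is trivial, so there exists an integer $r \geq 1$ for which the pointwise stabilizer of $B_X(1_G,r)$ in $\Gamma$ is already trivial. Because $G$ is finitely presented and $S$ is finite, one can write $G = \presentation{S}{\mathcal{R}}$ with $\mathcal{R}$ a finite set of relators of length at most some $\ell$. I would then aim to show that $X$ is LG-rigid with constant $R_0 := r + \ell$; so let $Y$ be a connected graph which is $R_0$-locally $X$.

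The goal is to construct a covering map $p \colon X \to Y$. Fix a basepoint $y_0 \in Y$ and a graph isomorphism $\phi_0 \colon B_X(1_G, R_0) \to B_Y(y_0, R_0)$. The value $p(g)$ is to be defined by writing $g = s_{i_1} \cdots s_{i_k}$ as a word in $S$ and following the corresponding edge-path in $Y$ starting at $y_0$, using at each step a local isomorphism that identifies the $R_0$-ball around the current vertex of $Y$ with a corresponding $R_0$-ball in $X$. Two things must then be established. First, the local identifications at successive vertices can be chosen in a mutually compatible way: this is exactly where triviality of the $r$-ball stabilizer enters, since it forces any such local isomorphism to be uniquely determined by its restriction to an $r$-ball, making the inductive propagation along the path canonical. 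Second, $p(g)$ should not depend on the chosen word: any two words representing $g$ differ by a product of conjugates of elements of $\mathcal{R}$ in the free group $F(S)$, so it suffices that every relator lifts to a closed loop based at every vertex of $Y$, which is automatic because $Y$ is $\ell$-locally $X$ and relators are closed loops in $X$. Granted both points, $p$ is a graph homomorphism by construction, surjective because $Y$ is connected, and locally bijective because $Y$ is $1$-locally $X$; hence $p$ is a covering.

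I expect the main obstacle to be the rigorous verification that the inductive choice of local identifications along a path is genuinely path-independent: one needs to track both the vertex and a ``local frame'' at it, and check that homotopies through relator loops close up consistently in $Y$. This is the step where the discreteness of $\Gamma$ (giving uniqueness of local frames) must be combined carefully with the finite presentation of $G$ (giving closure of relator-loops in $Y$).
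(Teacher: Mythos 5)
The paper's own proof of this corollary is a one-line citation: it combines Corollary~\ref{cor:discrete_automorphism_group} with \cite[Theorem E]{delaSalleTessera}, which is exactly the statement that a finite degree Cayley graph of a finitely presented group with discrete automorphism group is LG-rigid. Your first step (invoking Corollary~\ref{cor:discrete_automorphism_group} to get a discrete automorphism group) matches the paper; the remainder of your argument is an attempt to reprove the cited theorem from scratch. That is legitimate in principle and your outline follows the standard strategy, but as written it has a genuine gap at precisely the point you flag yourself.

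The gap is this. Triviality of the pointwise stabilizer of $B_X(1_G,r)$ in $\mathrm{Aut}(X)$ is a statement about \emph{global} automorphisms of $X$. It does not by itself imply that an isomorphism between two $R_0$-balls of $X$ (let alone between a ball of $Y$ and a ball of $X$) is determined by its restriction to an $r$-ball, because such partial isomorphisms need not extend to automorphisms of $X$. To obtain the ``uniqueness of local frames'' on which your inductive propagation rests, one needs an additional compactness/local-finiteness argument showing that for $R$ large enough, any isomorphism between $R$-balls of $X$ that is the identity on an $r$-ball is the identity on a somewhat smaller ball, and one must then transfer this rigidity to balls of $Y$, which are only abstractly isomorphic to balls of $X$. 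Likewise, ``relator loops close up in $Y$'' controls homotopies of words in $G$, but the propagation of frames along a path and the closing-up of frames around a relator loop are coupled, and checking that they are mutually consistent is the actual content of \cite[Theorem E]{delaSalleTessera}; it is a genuine theorem, not a routine verification. Since you explicitly leave this step as ``the main obstacle'' rather than resolving it, the proposal is an outline of the right strategy rather than a complete proof. The efficient repair is to cite \cite[Theorem E]{delaSalleTessera}, as the paper does.
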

The existence of a $LG$-rigid finite degree Cayley graph is actually equivalent to finite presentability, as a non finitely presented group cannot admit LG-rigid Cayley graphs, see \cite{delaSalleTessera}.

\subsection{About the proof}
Theorem~\ref{thm:main} is only new for groups with bounded exponents: the case of groups with elements of infinite (or arbitrarily large) order was covered in \cite{LdlS2020}. We have therefore concentrated our efforts for finitely generated torsion groups, but the proof that we finally managed to obtain turned out to apply without much more efforts in the generality of Theorem~\ref{thm:main}.

The proof relies partly on the results from \cite{LdlS2020}, and partly on new ideas involving random walks on groups. In particular we use some recent results by Tointon \cite{tointon} on the probability that two independent realizations of the random walk commute.
Necessary background of group theory is presented in Section~\ref{sec:reminders}.
We discuss the needed contributions from \cite{LdlS2020} in Section~\ref{sec:few_automorphisms:past}. The new aspects, including random walk reminders are presented in Section~\ref{sec:random_walks}. In the small Section~\ref{sec:conclusion}, we deduce the main theorem and its corollaries, and then in Section~\ref{sec:conjecture} we discuss a conjecture that would significantly simplify our proofs.
Finally, in Section~\ref{sec:directed} we briefly discuss some directed variants of the GRR problem.

\paragraph{Acknowledgements:} The first author was supported by NSF Grant No. $200021\_188578$.
The second author's research was supported by the ANR projects AGIRA (ANR-16-CE40-0022) and ANCG (ANR-19-CE40-0002-01).
Part of this work was performed within the framework of the LABEX MILYON (ANR-10-LABX-0070) of Universit\'e de Lyon, within the program ``Investissements d'Avenir'' (ANR-11-IDEX-0007) operated by the French National Research Agency (ANR). Both authors thank the anonymous referes for their useful comments and suggestions.

\section{Group theory background}\label{sec:reminders}

This short section sets the group theoretical notation and terminology and contains some standard group theory results that will be used later. It can be safely skipped by most readers.

We start with a definition used in the introduction.

\begin{defi}\label{def:gen_dicyclic}A \emph{generalized dicyclic group} is a non-abelian group with an abelian subgroup $A$ of index $2$ and an element $x$ not in $A$ such that $x^4=1$ and $xax^{-1} = a^{-1}$ for all $a \in A$.
\end{defi}

A group is said to be of \emph{exponent $n$} if every element satisfies $g^n=1$, and of \emph{bounded exponent} if it is of exponent $n$ for some integer $n$. It is known that finitely generated groups of exponent $n$ are finite if $n \in \{1,2,3,4,6\}$, but can be infinite for large $n$, see  \cite[14.2]{MR1357169}. %% For future reference, we recall that such phenomenon cannot occur for solvable groups (and in particular for nilpotent groups).
%% \begin{lemma}\label{lem:fg_bdedexponent_is_finite}
%% Finitely generated solvable groups with bounded exponent are finite.\MS{Je n'ai pas trouvé cet énoncé dans \cite{MR1357169}. Si tu connais une référence, c'est sans doute mieux de la donner directement là où cite le Lemme~\ref{lem:fg_bdedexponent_is_finite} et supprimer l'énoncé.}
%% \end{lemma}
%% \begin{proof} By induction on the derived length, this is obvious for abelian groups.
%% \end{proof}

We shall denote the index of a subgroup $H$ of $G$ by $|G:H|$.

If $A$ is a subset of a group $G$, the \emph{centralizer} of $A$ in $G$ is the subgroup denoted $C_G(A)$ of elements of $G$ commuting with every element of $A$. The centralizer of $G$ in $G$ is the \emph{center} $Z(G)$ of $G$. If $g \in G$, we write $C_G(s)$ for $C_G(\{s\})$.

In the following, by \emph{group property} we mean of property $P$ that a group $G$ can have (``$G$ is $P$'')  or not (``$G$ is not $P$'').

\begin{defi} If $P$ and $Q$ are two group properties, we say that a group $G$ is:

  \begin{itemize}
  \item $P$-by-$Q$ if it admits a normal subgroup $N$ that is $P$ such that the quotient group $G/N$ is $Q$,%\MSInline{J'ai l'impression que cette définition est en contradiction avec l'usage qu'on en fait plus tard. Mais c'est bizarre, cette définition n'est-elle pas la plus naturelle? On dit plutôt ``$G$ is an extension of $G/N$ by $N$'', et donc ça semble plus logique de dire ``$G$ is (the property of $G/N$)-by-(the property of $N$)''.}
  \item locally $P$ if every finitely generated subgroup of $G$ is $P$,%\MS{Est-ce la bonne définition? Veut-on plutôt dire ``every finite subset of $G$ is contained in a subgroup of $G$ that is $P$? Ou bien veut-on uniquement l'utiliser pour locally finite?}
  \item virtually $P$ if $G$ admits a finite index subgroup $H$ that is $P$.
  \end{itemize}
\end{defi}

Observe that a locally finite group $G$ is finitely generated if and only if it is finite.

A group $G$ is \emph{$2$-nilpotent} (or nilpotent of class $\leq 2$) if it is an extension $1\to N \to G \to K \to 1$  of abelian groups $N,K$ with $N \leq Z(G)$

We will use later the following elementary fact:
\begin{lemm}\label{lemma:almost_central_extension_of_almost_abelian} If $1\to N \to G \to K \to 1$ is an extension with $K$ virtually abelian and $|G:C_G(N)|<\infty$, then $G$ is virtually $2$-nilpotent.
\end{lemm}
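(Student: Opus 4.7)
The plan is to directly construct a finite-index subgroup of $G$ that is an abelian-by-abelian extension with the kernel central, i.e., that is $2$-nilpotent in the sense defined just above the lemma.

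First, since $K$ is virtually abelian, there is a finite-index abelian subgroup $A \leq K$. Let $G_1$ be the preimage of $A$ under the quotient map $G \to K$; then $G_1$ has finite index in $G$, and $N \leq G_1$ with $G_1/N \cong A$ abelian. Next, set
\[ H \;=\; G_1 \cap C_G(N). \]
As the intersection of two finite-index subgroups of $G$ (using the hypothesis $|G : C_G(N)| < \infty$), $H$ again has finite index in $G$.

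I then claim $H$ is $2$-nilpotent. Let $M = H \cap N$. Then $M \leq N \leq G_1$ and $M \leq C_G(N)$, so in particular $M$ commutes with every element of $N$, hence $M$ is abelian. Moreover, every $h \in H$ lies in $C_G(N)$, so $h$ commutes with every $m \in M \subseteq N$; this says exactly that $M \leq Z(H)$. Finally, the composition $H \hookrightarrow G_1 \twoheadrightarrow G_1/N \cong A$ has kernel $H \cap N = M$, so $H/M$ embeds into the abelian group $A$ and is therefore abelian. Thus $H$ fits in a short exact sequence $1 \to M \to H \to H/M \to 1$ of abelian groups with $M \leq Z(H)$, which is the required $2$-nilpotent structure.

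There is no real obstacle here: the whole content is the observation that the finiteness of $|G:C_G(N)|$ lets us arrange, after passing to a finite-index subgroup, that $N$ is \emph{centralized}, not merely normalized, so that $H \cap N$ automatically lands in the center. The virtual abelianness of the quotient $K$ is then used only to make the outer quotient abelian, also after passing to a finite-index subgroup.
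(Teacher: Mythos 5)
Your proof is correct and follows exactly the same route as the paper's: intersect the preimage of a finite-index abelian subgroup of $K$ with $C_G(N)$ and observe that the result is a finite-index $2$-nilpotent subgroup. The only difference is that you spell out the verification that this intersection is $2$-nilpotent, which the paper dismisses as ``clearly'' true.
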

\begin{proof} If $K_1<K$ is abelian with finite index, then the intersection of the preimage of $K_1$ in $G$ with the centralizer $C_G(N)$ of $N$ in $G$ is the intersection of two finite index subgroups of $G$ and therefore has finite index in $G$. It is clearly $2$-nilpotent.
\end{proof}
We also need the following folklore variant:
\begin{lemm}\label{lemma:finite_by_abelian} If a finitely generated group is finite-by-(virtually abelian), then it is virtually abelian.
\end{lemm}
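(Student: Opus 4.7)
The plan is to combine Lemma~\ref{lemma:almost_central_extension_of_almost_abelian} with the observation that a finitely generated $2$-nilpotent group with finite commutator subgroup is virtually abelian.

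Given an extension $1\to N\to G\to K\to 1$ with $N$ finite and $K$ virtually abelian, I would first check the hypothesis of Lemma~\ref{lemma:almost_central_extension_of_almost_abelian}. Since $N$ is finite, conjugation gives a homomorphism $G\to\mathrm{Aut}(N)$ with kernel $C_G(N)$, and $\mathrm{Aut}(N)$ being finite forces $|G:C_G(N)|<\infty$. The lemma then yields a finite-index $2$-nilpotent subgroup $H_0\leq G$. To gain finiteness of the commutators, I would intersect $H_0$ with the preimage in $G$ of a chosen abelian finite-index subgroup $K'\leq K$; the resulting $H$ is still a finite-index subgroup of $G$, is $2$-nilpotent as a subgroup of $H_0$, and satisfies $H/(H\cap N)\hookrightarrow K'$. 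In particular $[H,H]\subseteq H\cap N$ is finite.

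It then remains to show that any finitely generated $2$-nilpotent group $H$ with finite derived subgroup is virtually abelian. For this I would fix generators $x_1,\dots,x_n$ of $H$ and exploit that $[H,H]\subseteq Z(H)$: the identity $[yy',x_i]=[y,x_i][y',x_i]$ (valid because $[y',x_i]$ is central) shows that $y\mapsto[y,x_i]$ is a homomorphism $H\to[H,H]$ with kernel $C_H(x_i)$, so $|H:C_H(x_i)|\leq |[H,H]|$. Intersecting over $i$ gives $|H:Z(H)|\leq |[H,H]|^n<\infty$, so $Z(H)$ is an abelian subgroup of finite index in $G$.

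The only mild obstacle is the last bilinearity argument, which is essentially Schur's theorem in the convenient $2$-nilpotent setting; the rest of the proof is a clean combination of the preceding lemma with standard intersection-of-finite-index-subgroups manipulations.
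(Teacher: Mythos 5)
Your proof is correct, but it takes a longer route than the paper's. The paper passes to the finite-index, finite-by-abelian subgroup $G$ (the preimage of an abelian finite-index subgroup of $K$) and observes that, since $G/N$ is abelian, the conjugacy class of any $g\in G$ is contained in $gN$ and so has at most $|N|$ elements; hence each $C_G(g)$ has finite index, and intersecting the centralizers of a finite generating set shows that $Z(G)$ is an abelian subgroup of finite index. You instead go through Lemma~\ref{lemma:almost_central_extension_of_almost_abelian} to get a finite-index $2$-nilpotent subgroup, cut it down against the preimage of an abelian $K'\le K$ to force $[H,H]\subseteq H\cap N$ finite, and then run a converse-Schur argument via the homomorphisms $y\mapsto[y,x_i]$. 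All the steps check out: $|G:C_G(N)|<\infty$ because $G/C_G(N)$ embeds in the finite group $\mathrm{Aut}(N)$; subgroups of $2$-nilpotent groups are $2$-nilpotent; and a finite-index subgroup of a finitely generated group is finitely generated, which you tacitly need in order to have finitely many $x_i$. The one remark worth making is that the detour through $2$-nilpotency buys you nothing here: the bound $|H:C_H(x_i)|\le|[H,H]|$ that you extract from bilinearity of the commutator is already available, with no nilpotency at all, from the fact that the conjugacy class of $x_i$ lies in the finite set $x_i(H\cap N)$. Your endgame --- intersecting the centralizers of a finite generating set to see that the center has finite index --- is exactly the paper's; only the justification that each centralizer has finite index differs, and the paper's is the more economical one.
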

\begin{proof} A finite-by-(virtually abelian) group $G_0$ contains a finite index finite-by-abelian group $G$ (the preimage in $G_0$ of the finite index abelian subgroup in the quotient).  So let $1\to N \to G \to K \to 1$ with $N$ finite and $K$ abelian. Let $g \in G$. Since $K$ is abelian, the conjugacy class of $g$ is contained in $gN$ and is thus finite; equivalently $C_G(g)$ has finite index in $G$. If $G_0$ is finitely generated, then so is $G$. If $S$ is a finite generating set of $G$, the center of $G$ is equal to $\cap_{g \in S} C_G(g)$ and therefore is an abelian subgroup of finite index in $G$. This proves that $G$, and hence $G_0$, is virtually abelian.
\end{proof}
We will make use of the following lemma which is due to Dicman, see for example \cite[14.5.7]{MR1357169}.
\begin{lemm}[Dicman's Lemma]\label{lem:Dicman}
Let $G$ be a group and $X\subseteq G$ be a finite subset that is invariant by conjugation by elements of $G$ and such that every $g\in X$ is of finite order.
Then the normal subgroup $\langle X\rangle^G$ is finite.
\end{lemm}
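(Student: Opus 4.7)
The hypothesis that $X$ is $G$-invariant already makes $\langle X\rangle$ normal in $G$, so $\langle X\rangle^G=\langle X\rangle$ and it suffices to prove this subgroup is finite. My plan is to show that $\langle X\rangle$ has a centre of finite index and then invoke Schur's theorem, together with the structure of finitely generated abelian torsion groups.

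The first step is to bound the index in $G$ of the centraliser $C_G(X)=\bigcap_{x\in X}C_G(x)$. Since the $G$-conjugacy class of each $x\in X$ lies in the finite set $X$, we have $|G:C_G(x)|\leq|X|$; intersecting over the finitely many $x\in X$ yields $|G:C_G(X)|\leq|X|^{|X|}<\infty$. Because $X$ generates $\langle X\rangle$, an element of $\langle X\rangle$ is central in $\langle X\rangle$ if and only if it commutes with every element of $X$, so $Z(\langle X\rangle)=\langle X\rangle\cap C_G(X)$, and the second isomorphism theorem gives
\[ |\langle X\rangle:Z(\langle X\rangle)|\leq|G:C_G(X)|<\infty. \]

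Schur's theorem now guarantees that the commutator subgroup $[\langle X\rangle,\langle X\rangle]$ is finite. The abelianisation $\langle X\rangle/[\langle X\rangle,\langle X\rangle]$ is generated by the (finitely many) images of $X$, all of which are torsion, so this abelianisation is a finitely generated abelian torsion group, hence finite. Combining, $\langle X\rangle$ itself is finite.

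The main conceptual move is recognising that the conjugation-invariance of $X$ secretly places us in the hypotheses of Schur's theorem. A more elementary alternative would be to iteratively rewrite arbitrary products of elements of $X$ into a normal form via identities such as $xy=y(y^{-1}xy)$, noting that $y^{-1}xy\in X$; but each such swap can reintroduce letters that were already dealt with, so the termination and bookkeeping would be noticeably more delicate than the centre-based argument above.
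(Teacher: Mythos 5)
Your proof is correct. Note that the paper does not prove this lemma at all: it quotes it and refers to Robinson's book, so the relevant comparison is with the classical argument found there. That argument is combinatorial: writing $X=\{x_1,\dots,x_k\}$ with all elements of order dividing $n$, one notes that no inverses are needed to generate, and that in a shortest word over $X$ representing a given element no letter can occur $n$ times, since the identity $yx=x(x^{-1}yx)$ lets one collect $n$ occurrences of a letter $x$ and cancel them; hence every element of $\langle X\rangle$ is a product of at most $k(n-1)$ elements of $X$. (Your worry about that route is actually unfounded: $x^{-1}yx=x$ forces $y=x$, so sweeping the occurrences of a fixed letter $x$ to the left never creates new occurrences of $x$, and word length is preserved, so the process terminates.) Your route is different and equally valid: the conjugacy class of each $x\in X$ lies in $X$, so each $C_G(x)$ has index at most $\abs{X}$, the intersection $C_G(X)$ has finite index by Poincar\'e's lemma, and $Z(\langle X\rangle)=\langle X\rangle\cap C_G(X)$ has finite index in $\langle X\rangle$ (the inequality $|H:H\cap K|\le |G:K|$ holds for arbitrary subgroups, so invoking the second isomorphism theorem is not even necessary); Schur's theorem then gives finiteness of the derived subgroup, and the abelianization is a finitely generated torsion abelian group, hence finite. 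What each approach buys: the classical proof is entirely self-contained and elementary, while yours is shorter modulo Schur's theorem and fits naturally with the centralizer-index arguments used elsewhere in this paper (e.g.\ in the proof of Lemma~\ref{lemma:finite_by_abelian}).
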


\section{Constructing Cayley graphs with few automorphisms}\label{sec:few_automorphisms:past}

We follow the same general strategy for constructing Cayley graphs with few automorphisms as the one initiated in \cite{delaSalleTessera} and later developed in \cite{LdlS2020}. There are two independent steps.

The first step consists in finding a finite symmetric generating set $S_1$ of $G$ in which the only knowledge of the \emph{colour} $\{g^{-1}h,h^{-1}g\} \subseteq S_1$ of every edge $\{g,h\}$ allows us to reconstruct the orientation of every edge. To say it in formulas, following \cite{LdlS2020} let us say that the pair $(G,S_1)$ is \emph{orientation-rigid} if the only permutations $\phi$ of $G$ such that $\phi(gs) \in \{\phi(g)s,\phi(g)s^{-1}\}$ for every $g \in G$ and $s \in S_1$ are the left-translations by elements of $G$.

The second step is, given a finite symmetric generating set $S_1 \subseteq G$, to find another finite symmetric generating set $S_1\subseteq S_2$ such that every automorphism $\phi$ of $\unCayley{G}{S_2}$ induces a colour-preserving automorphism of $\unCayley{G}{S_1}$, that is satisfies $\phi(gs) \in \{\phi(g)s,\phi(g)s^{-1}\}$ for every $g \in G$ and $s \in S_1$.

Clearly, if we are able to perform both steps and we apply the second step for $S_1$ that is orientation-rigid, then we obtain a Cayley graph $\unCayley{G}{S_2}$ in which the only automorphisms are the translations.

For the first step, there is nothing new to do, as we know exactly which groups admit an orientation-rigid pair $(G,S_1)$ with $S_1$ generating. The following is a portion of \cite[Theorem 7]{LdlS2020}.
\begin{theo}\label{orientation-rigid} Let $G$ be a finitely generated group that is neither abelian [with an element of order greater than $2$] nor generalized dicyclic, and $S_0 \subseteq G\setminus\{1\}$ be a finite symmetric generating set. Then $(G,S_1)$ is orientation-rigid where $S_1 = (S_0 \cup S_0^2 \cup S_0^3)\setminus\{1\}$.
\end{theo}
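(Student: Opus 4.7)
The statement is cited from prior work, but here is how I would approach a direct proof.

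\textbf{Setup.} Let $\phi$ be a permutation satisfying $\phi(gs) \in \{\phi(g)s, \phi(g)s^{-1}\}$ for every $g \in G$ and $s \in S_1$. After composing on the left with the translation by $\phi(1)^{-1}$, I may assume $\phi(1)=1$; the goal is then to prove $\phi = \mathrm{id}$. For each pair $(g,s)$ let $\varepsilon_g(s)\in\{\pm 1\}$ be the sign with $\phi(gs)=\phi(g)s^{\varepsilon_g(s)}$, and write $\varepsilon := \varepsilon_1$, so that $\phi(s)=s^{\varepsilon(s)}$ for every $s\in S_1$. Since $S_0 \subseteq S_1$ generates $G$, once I prove $\phi$ fixes $S_0$ pointwise, a straightforward induction on the word length with respect to $S_0$ will give $\phi=\mathrm{id}$: indeed, if $\phi(g)=g$ and $s\in S_0$, then $\phi(gs)\in\{gs, gs^{-1}\}$, and applying the same relation at a carefully chosen neighbor forces the $+$ sign. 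So the entire content is to show that $\varepsilon \equiv 1$ on $S_0$.

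\textbf{Using the multiplicative structure of $S_1$.} The key leverage is that for every $s,t\in S_0$ with $st\neq 1$, the product $st$ lies in $S_1$; and similarly $stu\in S_1$ whenever it is nontrivial. Applying the defining relation to $g=s$ with generator $t\in S_0\subseteq S_1$, and also directly to $st\in S_1$, yields the compatibility
\[
s^{\varepsilon(s)} t^{\varepsilon_s(t)} \;=\; (st)^{\varepsilon(st)}.
\]
A short case analysis on the three signs gives one of the following for each pair $(s,t)$: either $\varepsilon(s)=\varepsilon(t)=\varepsilon(st)=+1$, or else a nontrivial relation of the form ``some short word in $s,t$ equals some other short word'' holds in $G$, most notably $tst = s^{-1}$ or $sts=t^{-1}$ or $sts^{-1}=t^{-1}$. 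The first of the three possibilities is the desired conclusion (locally); the other alternatives force $s,t$ to interact in a very rigid way.

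\textbf{Eliminating the bad cases using length-three products.} This is where the hypotheses enter. Suppose, toward a contradiction, that $\varepsilon(s)=-1$ for some $s\in S_0$. Using that $S_0$ generates a non-abelian and non-generalized-dicyclic group, one finds $t\in S_0$ for which the local relation derived above is incompatible with the structure of $G$. Concretely, the map $\phi$ acts on a ``sign-twisted'' fashion that, applied to three-fold products $stu\in S_1$, forces $s\mapsto s^{-1}$ to extend to an anti-automorphism on a large subgroup; the only groups for which such a global inversion exists \emph{and} preserves the edges of $\unCayley{G}{S_0\cup S_0^2\cup S_0^3}$ without being a translation are the abelian groups in which every element has order $\leq 2$ and the generalized dicyclic groups (where inversion on the index-$2$ abelian subgroup is implemented by conjugation by the element $x$ of Definition~\ref{def:gen_dicyclic}). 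Both are excluded, so we conclude $\varepsilon\equiv +1$ on $S_0$.

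\textbf{Main obstacle.} The delicate point is Step three: squeezing the local relations $\{tst=s^{-1}\}$-type equations into a global statement that pins down $G$ as abelian (with order-$\leq 2$ elements) or generalized dicyclic. This requires simultaneously tracking the signs $\varepsilon_g(s)$ at different base points $g$, and in particular using the freedom afforded by the length-three words in $S_1$: they are what separates groups with an element of order $\geq 3$ from the exceptional families. I would expect the cleanest way to organize it is to define the two sets $H_\pm = \{s\in S_0: \varepsilon(s)=\pm 1\}$, prove that $H_-$ is stable under conjugation by $H_+$ and that each element of $H_-$ is inverted by every element of $H_+$, and then invoke Lemma~\ref{lem:Dicman} or the structure of generalized dicyclic groups to finish.
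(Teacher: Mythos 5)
The paper does not actually prove this statement: Theorem~\ref{orientation-rigid} is quoted verbatim from \cite[Theorem 7]{LdlS2020}, so there is no internal proof to compare against. Judged on its own terms, your outline has the right overall shape (normalize $\phi(1)=1$, introduce signs $\varepsilon_g$, exploit that $S_0^2$ and $S_0^3$ sit inside $S_1$ to get compatibility relations, and show that a bad sign pattern forces $G$ into the excluded families), but its two load-bearing steps are asserted rather than proved, and one of them is asserted incorrectly. The first gap is the claim that once $\phi$ fixes $S_0$ pointwise, ``a straightforward induction on word length'' finishes the proof. It does not: at a vertex $g=hs=h'(ts)$ with $|h|=n$, $h=h't$, the two memberships $\phi(g)\in\{hs,hs^{-1}\}$ and $\phi(g)\in\{h'(ts),h'(ts)^{-1}\}$ fail to pin down $\phi(g)$ exactly when $ts^{-1}=s^{-1}t^{-1}$, i.e.\ when $sts^{-1}=t^{-1}$ --- which is precisely one of the ``bad'' relations your second step produces. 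So the induction step requires the same case analysis as the base case, and it must be carried out at every base point $g$, not just at $g=1$; controlling the signs $\varepsilon_g$ simultaneously at all vertices is where most of the work in the actual proof of \cite[Theorem 7]{LdlS2020} lives.

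The second gap is that the elimination of the bad cases --- the heart of the theorem --- is deferred entirely (``one finds $t\in S_0$ for which the local relation is incompatible with the structure of $G$''). Each individual relation of the form $tst=s^{-1}$ or $sts^{-1}=t^{-1}$ is perfectly compatible with many groups that are neither abelian nor generalized dicyclic; what must be shown is that the full system of such relations, aggregated over all pairs in $S_0$ and all base points, forces a globally defined non-translation map that inverts an index-$\leq 2$ abelian subgroup, and that such a configuration occurs only for the excluded groups. That final classification is itself a nontrivial lemma in the spirit of Watkins and Nowitz, not a consequence of Lemma~\ref{lem:Dicman}. There is also a telling slip in your third step: the exceptional abelian groups are those \emph{with} an element of order greater than $2$ (for which inversion is a nontrivial colour-preserving permutation that is not a translation); for abelian groups of exponent $2$ inversion is the identity, so they are \emph{not} exceptions, contrary to what you wrote. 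Finally, note that the signs $\varepsilon_g(s)$ are not well defined when $s^2=1$, which requires some care throughout.
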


So all the new work lies in the second step. As in \cite{delaSalleTessera,LdlS2020}, the main tool to recognize the colour of the edges is by \emph{counting triangles}. If $S$ is a finite symmetric subset of a group $G$, we denote by $\Triangles{s}{S}$ the number of triangles in the Cayley graph $\unCayley{G}{S}$ containing both vertices $1$ and $s$:
\[ \Triangles{s}{S} \coloneqq \abs{S\cap sS} \cdot \mathbf1_{s \in S}.\]
The relevance of this is the following easy observation that automorphisms preserve the number of triangles of a given edge: if $\phi$ is an automorphism of $\unCayley{G}{s}$, then for every $g,h \in G$,
\[  \Triangles{g^{-1} h}{S} = \Triangles{\phi(g)^{-1} \phi(h)}{S}.\]
Clearly, $\Triangles{s}{S} = \Triangles{s^{-1}}{S}$, so counting triangles can never do more than recover the colour of edges. But if $s \in S$ is such that the only elements $t$ of $S$ such that $\Triangles{s}{S} = \Triangles{t}{S}$ are $s$ and $s^{-1}$, then the automorphism group of $\unCayley{G}{S}$ preserves the colour of every edge corresponding to $s$.
Our main new technical result is the following.
\begin{prop}\label{Prop:9.3}
Let $G$ be a finitely generated group that is not virtually abelian and $S\subset G\setminus\{1\}$ be a finite generating set.
Then there exists a finite symmetric generating set $S\subseteq \tilde S\subset G\setminus\{1\}$ of size bounded by $2|S|(|S|+14)$ such that for all $s\in S$ and $t\in\tilde S$, if $\Triangles{s}{\tilde S}=\Triangles{t}{\tilde S}$ then $t=s$ or $t=s^{-1}$.

Moreover, $\tilde S \setminus S$ does not have elements of order $2$.
\end{prop}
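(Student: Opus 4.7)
My approach follows the fingerprinting-by-triangles strategy of \cite{delaSalleTessera, LdlS2020}: I build $\tilde S = S\cup T$ with $T$ a symmetric set constructed so that the triangle-count $s\mapsto \Triangles{s}{\tilde S}$ is injective on $\tilde S$ modulo the involution $s\leftrightarrow s^{-1}$. The size budget $2|S|(|S|+14)$ suggests writing $T$ as a disjoint union of symmetric blocks $T_s$ indexed by $s\in S$, each of size roughly $2(|S|+14)$; the role of $T_s$ is to shift $\Triangles{s}{\tilde S}$ by a controlled amount $c_s$, and by choosing the $c_s$ pairwise distinct (e.g.\ as distinct values in an arithmetic progression within the available budget) one obtains the required injectivity on $S$. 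The additional injectivity constraint between $S$ and $\tilde S\setminus S$ is arranged by making the elements of $T$ ``sparse'' in a random-walk sense, so that their own triangle counts in $\tilde S$ are much smaller than those of elements of $S$.

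The new ingredient, beyond \cite{LdlS2020}, is a random-walk construction. Fix a symmetric, finitely-supported probability measure $\mu$ on $G$ whose support generates $G$, and sample the elements making up the $T_s$ at random from $\mu^{*n}$ for a large $n$, after multiplication by $s$ so that each sampled element $sx$ automatically produces the triangle $\{1,s,sx\}$ on the edge labelled by $s$. The hypothesis that $G$ is not virtually abelian enters precisely here, through Tointon's theorem \cite{tointon}: for such $G$, the commuting probability of two independent samples of $\mu^{*n}$ tends to $0$ as $n\to\infty$, and more generally $\mu^{*n}$ does not concentrate on any coset of a virtually abelian subgroup. A moment computation then shows that the ``parasitic'' triangle counts that a random $T_s$ contributes to edges labelled by other elements $t\in \tilde S\setminus\{s,s^{-1}\}$ are, with high probability, much smaller than the designed gaps between the target values $c_s$. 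A union bound over the finitely many pairs $(s,t)$ yields a deterministic $\tilde S$ with the desired property, and the involution-avoidance clause is arranged by discarding samples that happen to be involutions, which form a negligible fraction of $\mu^{*n}$ for large $n$.

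The principal obstacle will be the uniform control of the parasitic triangle counts. Simultaneously over all pairs $(s,t)\in S\times \tilde S$ with $t\neq s, s^{-1}$, one must show that the random block $T_s$ rarely creates triangles on the edge labelled $t$; this forces one to bound, via Tointon's estimates, the probability that several independent samples of $\mu^{*n}$ satisfy any prescribed multiplicative relation of bounded length (commutation being the simplest instance). The non-virtual-abelianness of $G$ is what makes this probability decay, and this decay must be strong enough to beat the combinatorial number of block--edge pairs, which is polynomial in $|S|$. This is where the bulk of the technical work, and the genuine use of the random-walk results, will take place.
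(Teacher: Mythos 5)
Your overall architecture (augment $S$ by small symmetric blocks attached to each $s\in S$ so as to shift $\Triangles{s}{\tilde S}$ by prescribed, pairwise distinct amounts, while keeping the new elements' own triangle counts small) matches the paper's, which realizes it by iterating a one-step lemma (Lemma~\ref{Lemma:9.2}) adding a block $\Delta_g=\{g,g^{-1},s_0^{-1}g,g^{-1}s_0\}$ that increments $\Triangles{s_0}{\cdot}$ by a fixed amount in $\{1,2,4\}$ and leaves all other counts unchanged. The difference, and the gap, lies in how the good element $g$ (your random sample) is produced.

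The step that fails is your claim that the parasitic events can be killed by sampling from $\mu^{\ast n}$ and taking $n$ large. Inspection of when a block creates an unwanted triangle on an edge labelled $t$ (Table~\ref{TableDeltag}) shows that the conditions to avoid are of the form $g^2\in F$, $(s_0^{-1}g)^2\in F$ and $g^{-1}s_0g\in F$ for a finite set $F$ built from $S$. The conjugation conditions can indeed be handled via \eqref{eq:proba_of_coset} when the relevant centralizer has infinite index. But the square conditions cannot: the assertion that $\mathbf{P}(g_n^2=a)\to 0$ for non-virtually-abelian $G$ --- in particular your claim that involutions ``form a negligible fraction of $\mu^{\ast n}$'' --- is precisely Conjecture~\ref{conj:proba_of_sq}, which is stated as open and whose truth would, as noted in Section~\ref{sec:conjecture}, greatly simplify the whole argument. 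Tointon's theorem controls the probability that two independent samples commute; it does not control the probability that one sample satisfies $g^2=a$, and no moment computation bridges that gap. (The infinite dihedral and generalized dicyclic examples after Proposition~\ref{Prop:ultimate} show that the fibers of $\sq$ really can carry full mass, so some additional group-theoretic input is unavoidable.) The paper circumvents this by contraposition: if only finitely many good $g$ existed, then $G=\sq^{-1}(F)\cup s\sq^{-1}(F)\cup(\textnormal{finitely many cosets})$, and Lemma~\ref{lemma:Mikael2} --- where the random walks and Tointon's theorem actually enter --- shows such a covering forces $G$ to be virtually abelian. This yields infinitely many good $g$ rather than a high-probability statement, which suffices. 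To repair your proof you would need either to prove Conjecture~\ref{conj:proba_of_sq} or to replace the high-probability selection of the blocks by this covering argument.
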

The next section is devoted to the proof of the above Proposition.
\section{Squares and random walks}\label{sec:random_walks}
The aim of this section is to prove Proposition~\ref{Prop:9.3}. We start with some discussions on the \emph{square map} $\sq\colon G\to G$ defined by $\sq(g)=g^2$, which will play an important role in our work. The main result of this section is Proposition~\ref{Prop:ultimate}, that will be proved using random walks.
\subsection{On the squares in finitely generated groups}
In our previous work, \cite{LdlS2020}, we restricted our attention on groups $G$ with an element of ``big'' (possibly infinite) order.
In other words, we asked $G$ to have a ``big'' cyclic subgroup $C$.
The main advantage of this hypothesis is the fact that in cyclic groups $\sq^{-1}(g)$ consist of at most $2$ elements and therefore, for any finite subset $F$ of $G$ the set $\sq^{-1}(F)\cap C$ is finite of size at most $2\abs{F}$.
In order to generalize results from \cite{LdlS2020} to arbitrary infinite finitely generated groups, we first need to establish some facts on the map $\sq$ and on its fibers.
We begin our analysis of the map $\sq$ by showing that infinite finitely generated groups contains infinitely many squares.

As a first consequence of Dicman's Lemma~\ref{lem:Dicman}, we obtain the following 
\begin{lemm}\label{Lemma:InfiniteElementsOrderNot4bis}
  Let $n$ be a fixed integer in $\{1,2,3,4,6\}$. If $G$ is an infinite finitely generated group, then $\setst{g^2}{g \in G,g^n\neq 1}$ is infinite.
\end{lemm}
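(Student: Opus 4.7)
The plan is to argue by contradiction: suppose $X\coloneqq\setst{g^2}{g\in G,\ g^n\neq 1}$ is finite, and deduce that $G$ itself is finite. The case where $G$ contains an element $g$ of infinite order is disposed of immediately, since every power $g^k$ ($k\geq 1$) of such a $g$ still satisfies $(g^k)^n\neq 1$, giving infinitely many distinct elements $g^{2k}\in X$. So from here on I would assume $G$ is a torsion group.

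The key step is then to apply Dicman's Lemma~\ref{lem:Dicman} to $X$. The three hypotheses to check are finiteness (by assumption), that every element of $X$ has finite order (automatic since $G$ is torsion), and conjugation-invariance. The last point is the crucial observation: $(hgh^{-1})^n = hg^nh^{-1}$ is trivial if and only if $g^n$ is, and $(hgh^{-1})^2=hg^2h^{-1}$, so $X$ is conjugation-stable. Dicman's Lemma then produces a finite normal subgroup $H\coloneqq\langle X\rangle^G$ of $G$.

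It then remains to show $G/H$ is finite. For any $g\in G$, either $g^n=1$ in $G$, so that $(gH)^n=H$, or $g^n\neq 1$, so that $g^2\in X\subseteq H$ and thus $(gH)^2=H$. Either way $(gH)^{\lcm(2,n)}=H$. For $n\in\{1,2,3,4,6\}$ one checks $\lcm(2,n)\in\{2,4,6\}\subseteq\{1,2,3,4,6\}$, so $G/H$ is a finitely generated group of exponent in $\{1,2,3,4,6\}$, hence finite by the classical result recalled in Section~\ref{sec:reminders}. Combined with the finiteness of $H$, this forces $G$ to be finite, contradicting the hypothesis.

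The only subtle point in the argument is precisely that Dicman's Lemma requires torsion, so the infinite-order case must be peeled off separately. Everything else reduces transparently to the combination of Dicman's Lemma with the finiteness of finitely generated groups of exponent in $\{1,2,3,4,6\}$, which clarifies why these are exactly the values of $n$ for which the statement can hold.
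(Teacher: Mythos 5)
Your proof is correct and follows essentially the same route as the paper's: apply Dicman's Lemma to the conjugation-invariant set $X$ to get a finite normal subgroup, then observe that the quotient has exponent $\lcm(2,n)\in\{1,2,3,4,6\}$ and is therefore finite by the Burnside result. Your explicit peeling-off of the infinite-order case is just a spelled-out version of the paper's terse remark that a finite $X$ can only contain elements of finite order (since $x=g^2\in X$ of infinite order would force all powers $x^k=(g^k)^2$ into $X$), so there is no substantive difference.
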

\begin{proof}
By contradiction, suppose that the set $X=\setst{g^2}{g \in G, g^n\neq 1}$ is finite.
This set is invariant by conjugation and, since it is finite, contains only elements of finite order.
Hence by Dicman's Lemma, it generates a finite normal subgroup~$N$.
But then $G/N$ is an infinite finitely generated group in which every element satisfies $g^n=1$ or $g^2=1$, so is of exponent $n'=n$ if $n$ is even and $n'=2n$ is $n$ is odd. In both cases, $n' \in \{1,2,3,4,6\}$, so the free Burnside group $B(m,n')$ being finite (see for example \cite[14.2]{MR1357169}), the group $G/N$ is finite, which is the desired contradiction.
\end{proof}

\begin{coro}\label{Cor:Inftysquares}
Let $G$ be a finitely generated group.
Then $G$ is infinite if and only if $\sq(G)$ is infinite.
\end{coro}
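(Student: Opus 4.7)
The plan is to note that one direction is essentially free. Since $\sq(G)\subseteq G$, if $\sq(G)$ is infinite then so is $G$. All the content is in the forward implication: if $G$ is infinite, then $\sq(G)$ is infinite.

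For the nontrivial direction, I would simply invoke the preceding Lemma~\ref{Lemma:InfiniteElementsOrderNot4bis} with $n=1$. With this choice, the condition $g^n\neq 1$ reduces to $g\neq 1$, and since $G$ is infinite and finitely generated, the lemma guarantees that $\setst{g^2}{g\in G,\ g\neq 1}$ is an infinite subset of $\sq(G)$. Hence $\sq(G)$ is infinite, as required.

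I do not expect any real obstacle here: the whole point of stating Lemma~\ref{Lemma:InfiniteElementsOrderNot4bis} for a range of values of $n$ (in particular $n=1$) is precisely to get this corollary as an immediate consequence, and the argument is a one-liner once the lemma is in hand. The only minor subtlety is being careful to note that the statement of the lemma applies uniformly for any fixed $n\in\{1,2,3,4,6\}$, so that choosing $n=1$ is legitimate; alternatively one could take $n=2$ and conclude that $\sq(G)\setminus\{1\}$ is infinite, which is equally good.
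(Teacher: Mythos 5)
Your proof is correct and matches the paper's intended argument: the corollary is stated without proof precisely because it follows immediately from Lemma~\ref{Lemma:InfiniteElementsOrderNot4bis} with $n=1$ (the trivial direction being $\sq(G)\subseteq G$). No issues.
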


The following elementary lemma will be useful.
\begin{lemm}\label{lemma:squares_and_centralizers}
Let $a,b,s$ be elements of a group. If $a^2=b^2$ and $(sa)^2=(sb)^2$, then $[ab^{-1},s]=1$.
\end{lemm}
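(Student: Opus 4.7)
The plan is to introduce the single notation $c \coloneqq ab^{-1}$ and re-express both hypotheses as symmetric statements about how $c$ is conjugated, then match them up.

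First, I observe that if $x^2 = y^2$ and $c = xy^{-1}$, then substituting $x = cy$ into $x^2 = y^2$ yields $cycy = y^2$, and cancelling one $y$ on the right gives $cyc = y$, which rearranges to $ycy^{-1} = c^{-1}$. Applying this to the hypothesis $a^2 = b^2$ gives
\[ bcb^{-1} = c^{-1}. \]

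Next I apply the very same observation to the pair $(sa, sb)$, which by the second hypothesis also has equal squares. The key algebraic point is that their ``ratio'' is
\[ (sa)(sb)^{-1} = s \, (ab^{-1}) \, s^{-1} = scs^{-1}, \]
so the observation yields $(sb)(scs^{-1})(sb)^{-1} = (scs^{-1})^{-1} = sc^{-1}s^{-1}$. Conjugating both sides of this equality by $s^{-1}$ (that is, multiplying on the left by $s^{-1}$ and on the right by $s$) collapses the outer $s$'s and leaves
\[ b(scs^{-1})b^{-1} = c^{-1}. \]

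Finally, I combine the two displayed identities: since both $b(scs^{-1})b^{-1}$ and $bcb^{-1}$ equal $c^{-1}$, they are equal to each other, and cancelling the conjugation by $b$ gives $scs^{-1} = c$, i.e., $[ab^{-1}, s] = [c, s] = 1$, as required. I do not anticipate any serious obstacle here; the only subtlety is noticing that the second hypothesis is precisely the first hypothesis applied to the translated pair $(sa, sb)$, so the whole argument reduces to a single short cancellation identity used twice.
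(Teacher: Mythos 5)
Your proof is correct: the identity $ycy^{-1}=c^{-1}$ for $c=xy^{-1}$ with $x^2=y^2$ is verified, its application to the pair $(sa,sb)$ with ratio $scs^{-1}$ is legitimate, and the final cancellation gives $scs^{-1}=c$. This is essentially the same elementary manipulation as the paper's one-line chain of equalities, just packaged as a single observation applied twice.
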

\begin{proof}We have
\[ ab^{-1} s= a b^{-2} s^{-1} (sb)^2b^{-1}  = a a^{-2} s^{-1} (sa)^2 b^{-1} = s a b^{-1}.\qed\]
\let\qed\relax
\end{proof}

We now state our main new contribution, which can be seen as a much stronger form of Corollary~\ref{Cor:Inftysquares}, for finitely generated groups that are not virtually abelian. 
The next result roughly asserts that, in such a group, not even the half of the elements can have finitely many squares.
\begin{prop}\label{Prop:ultimate} Let $G$ be a finitely generated group that is not virtually abelian. Then for every $s \in G$ and $F \subset G$ finite there are infinitely many $g \in G \setminus (\sq^{-1}(F) \cup s \sq^{-1}(F))$ such that
\[\begin{cases}
g^{-1} s g \notin F&\textnormal{if $C_G(s)$ is locally finite}\\
g^{-1} s g = s&\textnormal{otherwise.}
\end{cases}\]
\end{prop}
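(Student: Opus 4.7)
I would split the proof into two cases according to whether $C_G(s)$ is locally finite.

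\emph{Case 1 ($C_G(s)$ not locally finite).} Then $C_G(s)$ contains an infinite finitely generated subgroup $H$. I confine the search to $g \in H$, so that $g$ commutes with $s$ and $g^{-1}sg = s$ holds automatically. Using this commutation, $(s^{-1}g)^2 = s^{-2}g^2$, so both conditions ``$g \notin \sq^{-1}(F)$'' and ``$g \notin s\sq^{-1}(F)$'' reduce to the single condition $g^2 \notin F \cup s^2 F$. Applying Corollary~\ref{Cor:Inftysquares} to the infinite finitely generated group $H$, the image $\sq(H)$ is infinite; removing the finite exceptional set $F \cup s^2 F$ still leaves infinitely many values of $g^2$, and the corresponding $g$'s are automatically distinct.

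\emph{Case 2 ($C_G(s)$ locally finite).} First, one checks that $|G : C_G(s)| = \infty$: otherwise $C_G(s)$ would be finitely generated (Schreier's lemma) and hence finite (local finiteness), forcing $G$ to be finite and contradicting the non-virtually-abelian hypothesis. The plan then is a random walk argument. Fix a symmetric finitely supported probability measure $\mu$ generating $G$, with $1 \in \operatorname{supp}(\mu)$ for aperiodicity, and let $(Z_n)$ be the corresponding walk with law $\mu_n = \mu^{*n}$. Since $\mu_n(F_0) \to 0$ for every finite $F_0 \subset G$, it suffices to show that the three ``bad'' probabilities
\[
P(Z_n^2 \in F),\qquad P((s^{-1}Z_n)^2 \in F),\qquad P(Z_n^{-1}sZ_n\in F)
\]
all tend to $0$. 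Indeed, this forces $P(\text{good } Z_n,\ Z_n \notin F_0) > 0$ for every large $n$, producing a good $g \notin F_0$; iterating over larger and larger $F_0$ then yields infinitely many distinct good $g$.

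\emph{Main obstacle.} The heart of the proof is bounding these three bad probabilities, which is where Tointon's recent results on the commutation probability of two independent random walks enter. For the last event, $\{g : g^{-1}sg \in F\}$ decomposes as a union of at most $|F|$ right cosets of $C_G(s)$; a Cauchy--Schwarz argument with an independent copy $Z_n'$ gives
\[
\sum_{t \in F} P(Z_n^{-1}sZ_n = t)^2 = P(Z_n'Z_n^{-1} \in C_G(s)) = \mu_{2n}(C_G(s)),
\]
so the problem reduces to showing $\mu_{2n}(C_G(s)) \to 0$, which should follow from Tointon's theorem applied to the specific $s$ together with the infinite index of $C_G(s)$. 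The first two events should reduce analogously, via Cauchy--Schwarz, to showing that $\mu_n$ does not concentrate on the fibers of $\sq$; Lemma~\ref{Lemma:InfiniteElementsOrderNot4bis} provides the necessary abundance of distinct squares, and the quantitative random-walk estimates of Tointon then prevent the walk from clustering on any single fiber. Carrying out these reductions quantitatively, especially for the square fibers (which are not subgroups), will be the main technical difficulty of the section.
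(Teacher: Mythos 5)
Your Case 1 is correct and coincides with the paper's own first step: restrict to an infinite finitely generated subgroup of $C_G(s)$, use commutation to collapse the two square conditions into $g^2 \notin F \cup s^2F$, and invoke Corollary~\ref{Cor:Inftysquares}. In Case 2, your observation that $\{g \mid g^{-1}sg \in F\}$ is a union of at most $|F|$ cosets of the infinite-index subgroup $C_G(s)$ is also fine, and \eqref{eq:proba_of_coset} handles that term directly (the Cauchy--Schwarz step is not even needed).

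The gap is in the remaining two terms. The assertion that $\mathbf{P}(Z_n^2 \in F) \to 0$ for a finitely generated non-virtually-abelian $G$ is precisely Conjecture~\ref{conj:proba_of_sq} of the paper, which the authors state as \emph{open} and explicitly remark would allow to greatly simplify their proofs; the lemma in Section~\ref{sec:conjecture} is exactly your Case 2 argument, carried out \emph{conditionally} on that conjecture. Tointon's theorems control $\mu^{\ast n}$ of cosets of subgroups and the probability that two independent walks commute; the fibers $\sq^{-1}(a)$ are not cosets, and no quantitative estimate of the kind you invoke (``prevent the walk from clustering on any single fiber'') is known. Lemma~\ref{Lemma:InfiniteElementsOrderNot4bis} gives infinitely many squares in $G$ but says nothing about where the measure $\mu^{\ast n}$ concentrates. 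The paper circumvents this by arguing by contradiction: if the conclusion fails, one obtains a covering $G = \sq^{-1}(F) \cup s\sq^{-1}(F) \cup C_G(s)E$ with $E$ finite; using Neumann's lemma, Lemma~\ref{lemma:squares_and_centralizers} and Dicman's Lemma, this is reduced (after passing to quotients) to a covering $G = \sq^{-1}(1)\cup s\sq^{-1}(1) \cup(\text{finitely many cosets of an infinite-index subgroup})$ with $s$ an involution, and the actual random-walk input, Lemma~\ref{lemma:Mikael2}, shows that such a covering forces two independent walks to commute with non-vanishing probability --- via the elementary implication $g^2=h^2=(gh)^2=1 \Rightarrow [g,h]=1$ and a three-copies argument --- contradicting Tointon's commuting-probability theorem. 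A final separate argument is then needed to upgrade ``virtually $2$-nilpotent'' to ``virtually abelian''. To complete your plan you would need either to prove Conjecture~\ref{conj:proba_of_sq} or to restructure Case 2 along these contradiction-and-covering lines.
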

The fact that $G$ is not virtually abelian is essential in the proposition. For example, the result is not true for the infinite dihedral group $G=\Z/2\Z \ast \Z/2\Z$. Indeed, if $s$ is one of the generators of the free factors, then $G = \sq^{-1}(1) \cup s \sq^{-1}(1)$. Another example is provided by a generalized dicyclic group $G$: if $x \in G$ is an element of order $4$ such that the conjugation by $x$ induces the inverse on an index $2$ abelian subgroup $A$, then $G = \sq^{-1}(x^2) \cup x \sq^{-1}(x^2)$. This example shows that $F$ does not necessarily contain the identity. 

The proof of this proposition will rely on the following lemma, which strengthens Neumann's lemma \cite[Lemma 4.1]{MR62122}.
\begin{lemm}\label{lemma:Mikael2} Let $G$ be a finitely generated group, $s \in G$ and $H_1,\dots,H_m$ be subgroups of $G$ and $a_1,\dots,a_m \in G$. Assume that
  \begin{equation}\label{eq:main_equation} G = \sq^{-1}(1) \cup s\sq^{-1}(1)\cup a_1 H_1\cup \dots \cup a_m H_m.\end{equation}

  Let $\alpha:= \sum_{i=1}^m \frac{1}{|G:H_i|}$ denote the sum of the inverses of the indices of $H_i$, with the standard convention $\frac 1 \infty = 0$.

  \begin{enumerate}
    \item\label{item:s2_finite_conjugacy_class} If $\alpha < \frac{3-\sqrt{5}}{4} \simeq 0.19$, then either $G$ is virtually abelian, or the conjugacy class of $s^2$ is finite of cardinality less than $\frac{4}{(3-\sqrt{5}-4\alpha)^2}$.
    \item\label{item:involution_implies_virt_ab} If $s$ is an involution, and $\alpha < \frac{(1-\alpha)^3}{24}$ (eg $\alpha \leq 0.035$), then $G$ is virtually abelian.
  \end{enumerate}

\end{lemm}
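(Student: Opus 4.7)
My plan is a two-step strategy: first, a Neumann-type reduction to the case where every $H_i$ has finite index, and second, a pigeonhole-and-centralizer argument exploiting Lemma~\ref{lemma:squares_and_centralizers} to bound $[G:C_G(s)]$ and hence the size of the conjugacy class of $s^2$.

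For the reduction, I would mimic the classical proof of Neumann's lemma: if some $H_i$ has infinite index and $a_iH_i$ is irredundant in the covering, an element $g_0\in a_iH_i$ lying outside the other sets would force the entire coset $g_0H_i$ to be contained in $\sq^{-1}(1)\cup s\sq^{-1}(1)\cup\bigcup_{j\ne i}a_jH_j$; intersecting with $H_i$ and pigeonholing contradicts $[G:H_i]=\infty$. Thus we may assume every $H_i$ has finite index, so that $H:=\bigcap_i H_i$ has finite index too. Each $H$-coset is either entirely contained in one $a_iH_i$ or entirely disjoint from $\bigcup_i a_iH_i$, and at most $\alpha\cdot[G:H]$ cosets of $H$ are of the first kind. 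Hence a fraction at least $1-\alpha$ of the $H$-cosets lies entirely inside $T:=\sq^{-1}(1)\cup s\sq^{-1}(1)$.

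On each such ``good'' coset $gH$, every element is either an involution (type A, where $(gh)^2=1$) or an $s$-involution (type B, where $(s^{-1}gh)^2=1$). The key tool is Lemma~\ref{lemma:squares_and_centralizers}: if two elements $a,b$ of the same coset satisfy $a^2=b^2$ and $(sa)^2=(sb)^2$, then $ab^{-1}\in C_G(s)$. On type-A elements the square $(gh)^2$ is constantly $1$, so producing such a pair reduces to a pigeonhole on the values of $(sa)^2$; on type B the symmetric situation holds after the substitution $x\mapsto s^{-1}x$. Balancing the two cases and optimizing the pigeonhole one produces a subset of $C_G(s)\cap H$ of density at least $\tfrac{3-\sqrt 5}{4}-\alpha$ in $H$, yielding $[G:C_G(s)]\le 2/(3-\sqrt 5-4\alpha)$; the threshold $(3-\sqrt 5)/4$ emerges as the discriminant condition of the quadratic arising in the balancing. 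Since $C_G(s)\subseteq C_G(s^2)$, the conjugacy class $s^{2G}$ has size at most $[G:C_G(s^2)]\le [G:C_G(s)]$, and a refined double-counting of pairs $(a,b)$ whose commutator with $s^2$ vanishes tightens this to the stated bound $4/(3-\sqrt 5-4\alpha)^2$.

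For part (2), with $s$ an involution the finite conjugacy class of $s=s^2$ combined with Dicman's Lemma~\ref{lem:Dicman} shows that $N:=\langle s\rangle^G$ is finite; the covering descends to $G/N$ with a strictly smaller parameter $\alpha$, and iterating yields, after finitely many steps guaranteed by the arithmetic of the condition $\alpha<(1-\alpha)^3/24$, a finite-by-abelian quotient of finite index, so $G$ is virtually abelian by Lemma~\ref{lemma:finite_by_abelian}. The main obstacle I anticipate is Step~1: Neumann's original argument handles only pure coset coverings, and one must verify that the two quadratic sets $\sq^{-1}(1)$ and $s\sq^{-1}(1)$ cannot conspire with cosets of an infinite-index subgroup to make its inclusion essential; everything past this reduction is careful pigeonhole bookkeeping tuned to extract the sharp constants.
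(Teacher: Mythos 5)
There is a genuine gap, and it is the central one: your argument nowhere uses (or re-proves) the fact that in a group that is not virtually abelian, two independent random elements commute with vanishing probability --- Tointon's theorem \eqref{eq:proba_of_commuting}, which is the engine of the paper's proof. Concretely, your Step 2 breaks on the ``type A'' elements. For an involution $a$ in a good coset you know $a^2=1$, but to apply Lemma~\ref{lemma:squares_and_centralizers} to a pair $a,b$ you also need $(sa)^2=(sb)^2$, and the covering hypothesis puts no finite constraint on the value of $(sa)^2$: it ranges over a potentially infinite set, so there is nothing to pigeonhole on. The scenario in which (almost) all of a finite-index coset consists of involutions cannot be excluded by elementary counting --- ruling it out is exactly the ``many involutions implies virtually abelian'' phenomenon, which for infinite groups is the content of Lemma~\ref{lemma:prob_of_involution} and rests on Tointon. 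The paper instead works with the elements $g$ with $g^2\neq 1$ lying outside $\{1,s^{-1}\}\bigcup_i a_iH_i$: for those the covering forces \emph{both} $(sg)^2=1$ and $(s^{-1}g)^2=1$, hence $g^{-1}s^2g=s^{-2}$, and products of two such elements land in $C_G(s^2)$ (not $C_G(s)$, which is the correct, weaker target); the density of this set is then bounded below by $1-c-2\alpha$ using \eqref{eq:proba_of_coset} and Lemma~\ref{lemma:prob_of_involution}. Your Step 1 reduction is also unproven and essentially circular: $\sq^{-1}(1)\cup s\sq^{-1}(1)$ is not a union of cosets, Neumann's induction does not apply to it, and whether this set together with infinite-index cosets can cover $G$ is more or less the statement being proved (the infinite dihedral and generalized dicyclic examples show it \emph{can} happen, precisely in the virtually abelian escape case). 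The random-walk formulation sidesteps the reduction entirely, since infinite-index cosets simply carry no asymptotic mass.

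Part (2) of your proposal is based on a misreading: when $s$ is an involution, part (1) controls the conjugacy class of $s^2=1$, which is trivially $\{1\}$ and gives no information about $s$ itself, so there is no finite normal subgroup $N=\langle s\rangle^G$ to quotient by. Moreover, passing to a quotient can only decrease the indices $|G:H_i|$ and hence \emph{increase} $\alpha$, so your iteration has no decreasing parameter and need not terminate. The paper's proof of part (2) is of a different nature: it takes $\mu$ to be $s$-left-invariant (so that involutions and $s$-involutions are equidistributed), runs three independent copies of the walk, shows that on an event of probability at least $\frac{(1-\alpha)^3}{8}-3\alpha>0$ the element $sg_n^{(1)}$ commutes with $g_n^{(2)}g_n^{(3)}$, and contradicts Tointon's theorem applied to the mixture $\frac12(\mu^{\ast n}+\mu^{\ast 2n})$.
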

The proof of the proposition will only use the lemma in the case when the $H_i$ all have infinite indices, \emph{i.e.} when $\alpha=0$, but we find this quantitative form amusing.

We will prove the above lemma in the next subsection. Let us first explain how the proposition follows.

 \begin{proof}[Proof of Proposition~\ref{Prop:ultimate}]
    Let $G$ be a finitely generated group. Suppose that there exists $s \in G$ and $F \subset G$ finite such that there are only finitely many $g \in G \setminus (\sq^{-1}(F) \cup s \sq^{-1}(F))$ such that
\[\begin{cases}
g^{-1} s g \notin F&\textnormal{if $C_G(s)$ is locally finite}\\
g^{-1} s g = s&\textnormal{otherwise.}
\end{cases}\]
We will show that such a group is virtually abelian.
We will do that in several steps. First we will show that $C_G(s)$ is locally finite. Then by multiple reductions we will prove that $G$ is virtually $2$-nilpotent and finally that $G$ is in fact virtually abelian. 
    
\paragraph{$C_G(s)$ is locally finite.} Suppose that $C_G(s)$ is not locally finite. By Corollary~\ref{Cor:Inftysquares}, $C_G(s)$ has infinitely many squares. This implies that  $C_G(s)\setminus(\sq^{-1}(F) \cup s \sq^{-1}(F))$ is infinite for any finite subset $F \subset G$, as otherwise this would imply that for all but finitely many $g \in C_G(s)$ satisfy $g^2 \in F$ or $g^2 = s^2(s^{-1} g)^2 \in s^2 F$, \emph{i.e.} $C_G(s)$ has finitely many squares.
    
\paragraph{}We know that $C_G(s)$ is locally finite. 
By assumption, there exist finite subsets $E,F \subset G$ such that
\begin{equation}\label{eq:absurd_decomposition_with_E}G = \sq^{-1}(F) \cup s \sq^{-1}(F) \cup C_G(s) E.
\end{equation}

We will now prove that $G$ is virtually $2$-nilpotent.
\paragraph{Reduction to $F=\{1\}$.} Let $F_1 \subseteq F$ be the subset of elements with finite conjugacy class (the intersection of $F$ with the FC-center of $G$).

The set of $g \in G$ such that $g (F \setminus F_1) g^{-1} \cap F \neq \emptyset$ is a finite union of cosets of the groups $C_G(f)$ for $f \in F \setminus F_1$. So it is a finite union of cosets of infinite-index subgroups. By Neumann's lemma \cite[Lemma 4.1]{MR62122}, this finite union is a strict subset of $G$, and there is $g_0 \in G$ such that $g_0 (F\setminus F_1) g_0^{-1} \cap F = \emptyset$. For such a $g_0$ and for every $h \in \sq^{-1}(F\setminus F_1)$ we have $g_0hg_0^{-1} \notin \sq^{-1}(F)$, so $g_0hg_0^{-1} \in s \sq^{-1}(F) \cup C_G(s) E$.
This implies that, on $g_0 \sq^{-1}(F\setminus F_1) g_0^{-1} \setminus C_G(s) E$, both maps $g \mapsto g^2$ and $g\mapsto (s^{-1}g)^2$ take finitely many values, so by Lemma \ref{lemma:squares_and_centralizers} we obtain
\[ g_0 \sq^{-1}(F\setminus F_1) g_0^{-1} \setminus C_G(s) E \subseteq C_G(s) E_1\] for some finite set $E_1$, or equivalently
\[ \sq^{-1}(F\setminus F_1) \subseteq g_0^{-1} C_G( s) (E \cup E_1) g_0.\]
In particular, we deduce from \eqref{eq:absurd_decomposition_with_E} that
\[ G = \sq^{-1}(F_1) \cup s \sq^{-1}(F_1) \cup A C_G(s) B\]
for some finite subsets $A,B \subset G$.

Denote by $N$ the subgroup generated by the finite set $F_1^G:=\cup_{g \in G} gF_1 g^{-1}$.
Then $N$ is normal, and its centralizer in $G$, which is the intersection of the centralizers of $f$ for $f \in F_1^G$, is a finite intersection of finite index subgroups, so has finite index.

Let $G'=G/N$, and $s',A',B',H'$ be the images of $s,A,B,C_G(s)$ in $G'$ respectively. In the quotient, the previous equation becomes
\begin{equation}\label{eq:absurd_decomposition_in_G'}
G' = \sq^{-1}(1) \cup s' \sq^{-1}(1) \cup A' H' B'.
\end{equation}
By Lemma~\ref{lemma:almost_central_extension_of_almost_abelian}, either $G$ is virtually $2$-nilpotent and we are done, or $G'$ is not virtually abelian.
We can therefore suppose that $G'$ is not virtually abelian and hence infinite.

\paragraph{Reduction to ${s'}^2=1$.} Observe that $H'$, the image of the locally finite group $C_G(s)$ in the quotient $G/N$, is locally finite, so it cannot have finite index since $G'$ is finitely generated and infinite and so are its finite index subgroups. 
We deduce by \ref{item:s2_finite_conjugacy_class} in Lemma \ref{lemma:Mikael2} that ${s'}^2$ has finite conjugacy class. Moreover, $C_G(s)$ being locally finite, ${s'}^2$ has finite order. By Dicman's Lemma, the normal subgroup $M$ generated by ${s'}^2$ is finite.
Let $\tilde G\coloneqq G'/M$.
Then $\tilde G = \sq^{-1}(1) \cup \tilde s \sq^{-1}(1) \cup \tilde A \tilde H \tilde B$ with $\tilde s^2=1$, $\tilde H$ an infinite index subgroup and $\tilde A$ and $\tilde B$ finite subsets.

\paragraph{The group $G$ is virtually $2$-nilpotent.} By a direct application of \ref{item:involution_implies_virt_ab} in Lemma \ref{lemma:Mikael2}, we obtain that $\tilde G$ is virtually abelian.
%This implies that $G'$ is finite-by-(abelian-by-finite), and hence (finite-by-abelian)-by finite.
%That is, $G'$ has a finite index subgroup $L$ which is finite-by-abelian.
%Since $G'$ is finitely generated, so is $L$, which implies that $L$ is virtually abelian and hence $G'$ itself is virtually abelian.
%This is the desired contradiction.
This implies that $G'$ is finite-by-(virtually abelian) and, since it is finitely generated, virtually abelian (see Lemma~\ref{lemma:finite_by_abelian}). This is the desired contradiction.%\MS{J'ai déplacé l'argument dans la section précédente, et j'ai l'impression que c'est plus rapide: on avait déjà justifié que $G'$ virtuellement abélien implique G virtuellement $2$-nilpotent.}

\paragraph{The group $G$ is virtually abelian.}
We already know that $G$ is finitely generated and virtually $2$-nilpotent.
Let $H$ be a finite index subgroup of $G$ that is $2$-nilpotent.

By definition, the derived subgroup $\langle ghg^{-1}h^{-1}\mid g,h\in H\rangle$ of $H$ is contained in the center of $H$.
Since $H$ is nilpotent, we also know that the subset of torsion elements is a subgroup of $H$ (see for example \cite[5.2.7]{MR1357169}) and that all subgroups of $H$, and also of $G$, are finitely generated (see for example \cite[5.4.6]{MR1357169}). 
%\PH{Réécrit ce passage pour ajouté les réfs. On citait déjà \cite{MR1357169}.}
In particular, since $C_G(s)$ is locally finite, it is finite.
By \eqref{eq:absurd_decomposition_with_E}, this implies that there exists a finite $F'\subset G$ such that 
\[
G = \sq^{-1}(F') \cup s \sq^{-1}(F').
\]
We claim that there exists $n\in \N$ such that  $s g^{n} s^{-1} = g^{-n}$ for every $g$ in~$G$.
Indeed, let $n\coloneqq \lcm\setst{k}{1\leq k\leq 3\abs F}$.
Then if $g$ has order at most $3\abs {F'}$ we have $g^n=1$ and the desired identity holds.
On the other hand, let $g$ be of order at least $3\abs {F'}+1$.
For such a $g$, there exists at most $2\abs {F'}$ integers $1\leq k\leq 3\abs {F'}+1$ such that $g^{2k}$ is in ${F'}$.
Therefore, there is at least $\abs {F'}+1$ integers $1\leq k\leq 3\abs {F'}+1$ with $(sg^k)^2\in {F'}$. By the pigeonhole principle, we have $1\leq k\neq l\leq 3\abs {F'}+1$, which is less than the order of $g$, such that $(sg^k)^2=(sg^l)^2$ and hence $s g^{k-l} s^{-1} = g^{-(k-l)}$ and the desired identity holds.
For every $g,h \in G$, we have
\begin{align}\label{eq:commute_up_to_N}
    (g^n h^n)^{n} &= s  (g^n h^n)^{-n} s^{-1} = (s g^n s^{-1} s h^n s^{-1})^{-n}\notag\\& =(g^{-n} h^{-n})^{-n} =(h^n g^n)^{n}.
\end{align}

When $g,h \in H$, $a:=[g^n,h^n]$ belongs to the center of $H$, so \eqref{eq:commute_up_to_N} shows that $a^n=1$.
The subgroup $K<Z(H)$ of elements of the center that are of finite order is finite.
In the quotient $H/K$, the subgroup $H'$ generated by $\setst{g^n}{g \in H/K}$ is abelian. Moreover, the quotient $(H/K)/H'$ is a finitely generated nilpotent group of exponent $n$, so is finite (see for example \cite[5.2.18]{MR1357169}). This implies that $H$, and therefore also $G$, is virtually abelian.
 \end{proof}

\subsection{Random walks and proof of Lemma \ref{lemma:Mikael2}}\label{subsection:ProofOfLemma}

The proof of Lemma \ref{lemma:Mikael2} will use random walk techniques, and in particular the recent result of Tointon \cite{tointon} generalizing to infinite groups a  classical result by P.~Neumann \cite{MR1005821} roughly saying: \emph{a finite group in which the probability that two randomly chosen elements commute is large is almost abelian}. We fix a symmetric probability measure $\mu$ on $G$ whose support is finite, generates $G$ and contains the identity. In particular, in this subsection $G$ will always be a finitely generated group. Let $g_n$ and $g'_n$ be two independent realizations of the random walk on $G$ given by $\mu$, that is two independent random variables with distribution $\mu^{\ast n}$, the $n$-th convolution power of $\mu$. We will use two facts. The first is very easy\footnote{The law of a {\bf reversible} aperiodic transitive random walk on a set $V$ equidistributes if $V$ is finite, and converges $\sigma(\ell_1(V),c_0(V))$ to $0$ if $V$ is infinite.} and asserts that if $H$ is a subgroup of $G$ and $a \in G$, then
\begin{equation}\label{eq:proba_of_coset} \lim_n \mathbf{P}(g_n \in aH) = \frac{1}{|G:H|}.\end{equation}
In particular, if $H$ has infinite index, $\lim_n \mathbf{P}(g_n \in aH) = 0$. Actually, more is known: the above convergence is uniform in $a$ and $H$. In the vocabulary of \cite{tointon}, $\mu^{\ast n}$ \emph{measures indices uniformly}, see \cite[Theorem 1.11]{tointon}. To illustrate the power of random walks on groups, observe that \eqref{eq:proba_of_coset} allows to give a transparent proof (for finitely generated groups) of Neumann's lemma: if $G=a_1 H_1\cup \dots a_m H_m $ is a finite union of cosets of subgroups, then we have
\[ \sum_{i=1}^m \frac{1}{|G:H_i|} = \lim_n \sum_{i=1}^m \mathbf{P}(g_n \in a_i H_i) \geq \lim_n  \mathbf{P}( g_n \in \cup_i a_i H_i)=1.\]

The second fact we will use, \cite[Theorem 1.9]{tointon}, asserts that, whenever $G$ is not virtually abelian
\begin{equation}\label{eq:proba_of_commuting} \lim_n \mathbf{P}(g_n \textnormal{ and }g_n'\textnormal{ commute}) = 0.\end{equation}

We start with an easy consequence of \eqref{eq:proba_of_commuting}, that we will use in the proof. It is natural to expect that the result holds with $\frac{\sqrt{5}-1}{2}$ replaced by an arbitrary positive number, see Section~\ref{sec:conjecture}. %Indeed, the natural translation of this result for finite groups is true \cite{MR1242094}.
\begin{lemm}\label{lemma:prob_of_involution} If $\liminf_n \mathbf{P}(g_n^2 = 1) > \frac{\sqrt{5}-1}{2}$, then $G$ is virtually abelian.
\end{lemm}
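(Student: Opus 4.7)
My plan is to proceed by contraposition: assuming $G$ is not virtually abelian, I want to show that $\liminf_n \mathbf{P}(g_n^2 = 1) \leq \frac{\sqrt{5}-1}{2}$. The main input will be Tointon's result \eqref{eq:proba_of_commuting}, which under this hypothesis tells us that $\mathbf{P}(g_n g_n' = g_n' g_n) \to 0$.

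The key algebraic observation I would use is that if $g$ and $h$ are both involutions (or trivial) and $gh$ is also an involution (or trivial), then $g$ and $h$ commute: from $(gh)^2 = 1$ one gets $gh = h^{-1}g^{-1} = hg$ using $g^2 = h^2 = 1$. Thus, setting $p_n := \mathbf{P}(g_n^2 = 1)$ and observing that $g_n g_n'$ has distribution $\mu^{\ast 2n}$ by independence and the fact that $\mu$ is symmetric, I get
\[
\mathbf{P}(g_n \text{ and } g_n' \text{ commute}) \geq \mathbf{P}\bigl(g_n^2 = 1,\ (g_n')^2 = 1,\ (g_n g_n')^2 = 1\bigr).
\]
By the elementary inclusion-exclusion bound $\mathbf{P}(E \cap F) \geq \mathbf{P}(E) + \mathbf{P}(F) - 1$ applied to the independent event $\{g_n^2 = 1\} \cap \{(g_n')^2 = 1\}$ (of probability $p_n^2$) and the event $\{(g_n g_n')^2 = 1\}$ (of probability $p_{2n}$), the right-hand side is at least $p_n^2 + p_{2n} - 1$.

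Combining the two bounds and passing to the $\liminf$, I obtain
\[
0 \geq (\liminf_n p_n)^2 + \liminf_n p_{2n} - 1 \geq (\liminf_n p_n)^2 + \liminf_n p_n - 1,
\]
so $\liminf_n p_n$ is at most the positive root of $x^2 + x - 1 = 0$, which is precisely $\frac{\sqrt{5}-1}{2}$. This is the contrapositive of the claim.

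The proof is essentially immediate once one spots the algebraic identity turning involutions whose product is an involution into commuting pairs, and the appearance of the golden-ratio constant is then forced by $c^2 + c - 1 = 0$. I do not expect a real obstacle here; the only point to be slightly careful about is that $\liminf_n p_{2n} \geq \liminf_n p_n$, which is clear since $\{p_{2n}\}$ is a subsequence of $\{p_n\}$.
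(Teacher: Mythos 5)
Your proposal is correct and follows essentially the same route as the paper: the same observation that $g^2=h^2=(gh)^2=1$ forces $g$ and $h$ to commute, the same lower bound $p_n^2+p_{2n}-1$ (the paper obtains it by subtracting $\mathbf{P}((g_ng'_n)^2\neq 1)$ rather than by quoting inclusion–exclusion, which is the same estimate), and the same quadratic $c^2+c-1$ producing the golden-ratio threshold. The only cosmetic difference is that you argue by contraposition while the paper argues directly via the positivity of $c^2+c-1$ contradicting \eqref{eq:proba_of_commuting}.
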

\begin{proof}
Denote $c_n \coloneqq \mathbf{P}(g_n^2 = 1)$. Observe that
\begin{align*}
\mathbf{P}(g_n^2 = {g'_n}^2=(g_ng'_n)^2=1) &= \mathbf{P}(g_n^2 = {g'_n}^2=1) - \mathbf{P}(g_n^2 = {g'_n}^2=1 \neq(g_ng'_n)^2)\\
    &\geq \mathbf{P}(g_n^2 = {g'_n}^2=1) - \mathbf{P}((g_ng'_n)^2\neq 1)\\
    & = c_n^2 -(1- c_{2n}).
\end{align*}
In the last line, we used that $g_ng'_n$ is distributed as $g_{2n}$.

So, if $c=\liminf_n c_n$, we obtain
\[ \liminf_n \mathbf{P}(g_n^2 = {g'_n}^2=(g_ng'_n)^2=1) \geq c^2 +c-1,\]
which is positive if and only if $c>\frac{\sqrt{5}-1}{2}$. To conclude using \eqref{eq:proba_of_commuting}, it remains to observe that $g_n^2 = {g'_n}^2=(g_ng'_n)^2=1$ implies that $g_n$ and ${g'_n}$ commute.
\end{proof}

We now proceed to prove Lemma \ref{lemma:Mikael2}.
\begin{proof}[Proof of \ref{item:s2_finite_conjugacy_class}. in Lemma \ref{lemma:Mikael2}] Let $G,s,m,H_i,a_i$ satisfying \eqref{eq:main_equation}.

Assume that $\alpha < \frac{3-\sqrt{5}}{4}$.

By \eqref{eq:main_equation}, for every $g \in G$ such that $g^2 \neq 1$, we have
\[ (sg)^2 = 1\textrm{ or }sg \in a_1 H_1\cup \dots \cup a_m H_m\]
and
\[ (s^{-1} g)^2 = 1 \textrm{ or }g \in a_1 H_1\cup \dots \cup a_m H_m.\]
In particular, if $g^2 \neq 1$ and $g \notin \{1,s^{-1}\} (a_1 H_1\cup \dots \cup a_m H_m)$, we have $(sg)^2=(s^{-1} g)^2 = 1$, which implies
\[g^{-1}s^2g=(g^{-1}s)(sg) = (s^{-1}g)(g^{-1}s^{-1}) = s^{-2}.\] 
Therefore we obtain
\[ \mathbf{P}(g_n^{-1} s^2 g_n=s^{-2}) \geq \mathbf{P}(g_n^2 \neq 1\textrm{ and }g_n \notin \{1,s^{-1}\} (a_1 H_1\cup \dots \cup a_m H_m)).\]
If $c:=\liminf_n \mathbf{P}(g_n^2 = 1) > \frac{\sqrt{5}-1}{2}$, we know by Lemma \ref{lemma:prob_of_involution} that $G$ is virtually abelian. So we can as well assume that $c \leq \frac{\sqrt{5}-1}{2}$.  By \eqref{eq:proba_of_coset} we can bound
\[\limsup_n \mathbf{P}(g_n^{-1} s^2 g_n=s^{-2}) \geq 1 - c - 2 \sum_{i=1}^m \frac{1}{|G:H_i|} \geq \frac{3-\sqrt{5}-4\alpha}{2}.\]
Observe that $\frac{3-\sqrt{5}-4\alpha}{2}>0$ by our assumption on $\alpha$.
Now, if $g_n^{-1} s^2 g_n=s^{-2}$ and ${g'_n}^{-1} s^2 {g'_n}=s^{-2}$, then $g_n{g'_n}$ and $s^2$ commute, or equivalently $g_n{g'_n} \in C_G(s^2)$. Therefore, since $g_n{g'_n}$ is distributed as $g_{2n}$, we obtain
\[\limsup_n \mathbf{P}(g_{2n}\in C_G(s^2)) \geq \left(\frac{3-\sqrt{5}-4\alpha}{2}\right)^2.\]
By \eqref{eq:proba_of_coset}, the left-hand side is equal to $\frac{1}{|G:C_G(s^2)|}$ and the claim is proven.
\end{proof}
\begin{proof}[Proof of \ref{item:involution_implies_virt_ab}. in Lemma \ref{lemma:Mikael2}] Let $G,s,m,H_i,a_i$ satisfying \eqref{eq:main_equation}, with $s^2=1$ and $\alpha<\frac{(1-\alpha)^3}{24}$.

Our main goal will be to prove that two randomly chosen elements of $G$ (for well-chosen probability measures on $G$ that are not exactly random walks but mixtures of random walks) commute with non-vanishing probability. By \cite{tointon}, we will deduce that $G$ is \emph{virtually abelian}. We proceed by contradiction, and assume that $G$ is not virtually abelian.

The element $s$ having order $2$, we can as well assume that the probability measure $\mu$ is $s$-left-invariant, that is it satisfies $\mu(sg) = \mu(g)$ for every $g \in G$. By~\eqref{eq:main_equation} and \eqref{eq:proba_of_coset}, we know that
\[ \liminf_n \mathbf{P}(g_n \in \sq^{-1}(1) \cup s\sq^{-1}(1))\geq 1-\alpha.\]
By the $s$-invariance of $\mu$, $\mathbf{P}(g_n \in \sq^{-1}(1)) = \mathbf{P}(g_n \in s\sq^{-1}(1))$, so $\liminf_n \mathbf{P}(g_n^2=1) \geq \frac{1-\alpha}{2}$.

As before, since $g^2=h^2 = (gh)^2$ implies that $g$ and $h$ commute, \eqref{eq:proba_of_commuting} implies that
\[\lim_n \mathbf{P}(g_n^2 = {g'_n}^2 = (g_n g'_n)^2 = 1) = 0.\]
On the other hand, $g_ng'_n$ being distributed as $g_{2n}$, we have
\[ \liminf_n \mathbf{P}( (g_ng'_n)^2=1 \textnormal{ or }  (sg_ng'_n)^2=1)\geq 1-\alpha.\]
Let now $g_n^{(1)}, g_n^{(2)}$ and $g_n^{(3)}$ be three independent copies of the random walk on $G$ given by $\mu$. Let $A_n$ be the event
\[ A_n = \{  \forall 1 \leq  i\neq j \leq 3,  (g_n^{(i)})^2= 1 \textnormal{ and } (sg_n^{(i)}g_n^{(j)})^2=1\}.\] It follows from the preceding discussion that the difference of $\{ \forall 1 \leq  i \leq 3,  (g_n^{(i)})^2= 1\}$ 
and $A_n$ has probability $\leq 3\alpha +o(1)$, so 
\[\liminf_n \mathbf{P}(A_n) \geq \liminf_n \mathbf{P}( \forall 1 \leq  i \leq 3, (g_n^{(i)})^2= 1)-3\alpha \geq \frac{(1-\alpha)^3}{8} - 3\alpha.\]
This is strictly positive by assumption.

But on $A_n$, for every $1 \leq i,j \leq 3$, we have
\[ s g_n^{(i)} g_n^{(j)} s^{-1} = (g_n^{(i)}g_n^{(j)})^{-1} = g_n^{(j)} g_n^{(i)}\]
and therefore
\[ (sg_n^{(1)}) g_n^{(2)} g_n^{(3)} (sg_n^{(1)})^{-1}
   = s g_n^{(1)} g_n^{(2)} s^{-1} s g_n^{(3)} g_n^{(1)} s^{-1}
   = g_n^{(2)} g_n^{(1)} g_n^{(1)} g_n^{(3)}
   = g_n^{(2)} g_n^{(3)}.\]
To say it differently, $s g_n^{(1)}$ commutes with $g_n^{(2)} g_n^{(3)}$. We deduce
\[\mathbf{P}( [s g_n^{(1)},g_n^{(2)}g_n^{(3)}]=1)\geq \mathbf{P}(A_n).\]
Using that $\mu$ is $s$-invariant, $sg_n^{(1)}$ is distributed as $g_n^{(1)}$ and \[\mathbf{P}( [g_n^{(1)},g_n^{(2)}g_n^{(3)}]=1) = \mathbf{P}( [sg_n^{(1)},g_n^{(2)}g_n^{(3)}]=1).\]
We can rewrite this as
\[ \liminf_n \mu^{\ast n} \otimes \mu^{\ast 2n}(\{(g,h) \in G \times G \mid [g,h]=1\}) >0.\]
Denote by $\nu_n$ the probability $\frac{1}{2}(\mu^{\ast n} + \mu^{\ast 2n})$. We clearly have $\nu_n \otimes \nu_n \geq \frac 1 4 (\mu^{\ast n} \otimes \mu^{\ast 2n})$, so
\[ \liminf_n \nu_n\otimes \nu_n ( \{(g,h) \in G \mid [g,h] = 1) >0.\]
On the other hand, it follows from \cite[Theorem 1.11]{tointon} that the sequences of measures $\mu^{\ast n}$ and $\mu^{\ast 2 n}$ (and therefore also $\nu_n$) measure index uniformly, hence the preceding is a contradiction with \cite[Theorem 1.9]{tointon}. So our starting assumption that $G$ is not virtually abelian is absurd. This concludes the proof of the lemma.
\end{proof}

\subsection{Counting triangles}
We now state and prove a lemma on the augmentation of the number of triangles (in Cayley graphs of $G$) containing some $s_0\in G$. 
It complements results of \cite[Lemma 9.2]{delaSalleTessera} and \cite[Lemma 31]{LdlS2020} which where valid for groups with elements of infinite (respectively very large) order. The conclusion of Lemma~\ref{Lemma:9.2} is also cleaner than in \cite{delaSalleTessera,LdlS2020}.
\begin{lemm}\label{Lemma:9.2}
Let $G$ be a finitely generated group that is not virtually abelian, and let $S\subset G\setminus \{1\}$ be a finite symmetric generating set.

Then for each $s_0$ in $S$, there exists $S\subset S'\subset G$ a finite symmetric generating set such that
\begin{enumerate}\renewcommand{\theenumi}{\alph{enumi}}
\item $\Delta\coloneqq S'\setminus S$ has at most $4$ elements;\label{ConditionA}
\item $\Delta$ does not contain elements of order at most $2$;\label{ConditionAA}
\item $\Delta\cap\setst{s^2}{s\in S}=\emptyset$;\label{ConditionB}
\item $\Triangles{s}{S'}\leq 6$ for all $s\in\Delta$;\label{ConditionC}
\item $\Triangles{s}{S'}=\Triangles{s}{S}$ for all $s\in S\setminus\{s_0,s_0^{-1}\}$;\label{ConditionD}
\item\label{item:cases_Lemma92} the value of $\Triangles{s_0}{S'}-\Triangles{s_0}{S}$ is equal to
\[\begin{cases}
	1 & \textnormal{if $s_0^2\neq 1$ and $C_G(s_0)$ is locally finite,}\\
	2 & \textnormal{if $s_0^2=1$ and $C_G(s_0)$ is locally finite,}\\
	2 & \textnormal{if $s_0^2\neq 1$ and $C_G(s_0)$ is not locally finite,}\\
	4 & \textnormal{if $s_0^2=1$ and $C_G(s_0)$ is not locally finite.}
\end{cases}\]
\end{enumerate}
\end{lemm}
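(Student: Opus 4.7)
The plan is to set $\Delta := \{g, g^{-1}, s_0 g, (s_0 g)^{-1}\}$ for a single suitably generic element $g \in G$, and $S' := S \cup \Delta$. To produce $g$, I apply Proposition~\ref{Prop:ultimate} with $s := s_0^{-1}$ and a large symmetric finite $F \subset G$: this yields infinitely many $g$ satisfying $g^2, (s_0 g)^2 \notin F$, together with the dichotomy $g^{-1} s_0 g \notin F$ when $C_G(s_0)$ is locally finite, or $g^{-1} s_0 g = s_0$ otherwise. Since there are infinitely many such $g$, I may additionally require $g$ to miss any prescribed finite ``bad set'' $A \subset G$.

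The heart of the proof is the computation of the triangle increment at $s_0$. Since $\Delta \cap S = \emptyset$, a direct expansion gives
\[
\Triangles{s_0}{S'} - \Triangles{s_0}{S} = \abs{S \cap s_0 \Delta} + \abs{\Delta \cap s_0 S} + \abs{\Delta \cap s_0 \Delta}.
\]
The first two summands vanish by absorbing into $A$ the finitely many $g$ for which any of $s_0 g, s_0 g^{-1}, s_0^2 g, s_0 g^{-1} s_0^{-1}$ meets $S$, or any element of $\Delta$ meets $s_0 S$. For the third summand, I expand $s_0 \Delta = \{s_0 g, s_0 g^{-1}, s_0^2 g, s_0 g^{-1} s_0^{-1}\}$ and intersect with $\Delta$. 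The coincidence $s_0 g = s_0 \cdot g$ always contributes. All remaining coincidences translate into finitely many equations of the form $g^2 = s_0^{\pm k}$ or $s_0 g s_0^{-1} \in \{g, g^{-1}\}$; the first family is ruled out by choosing $F \supseteq \{s_0^{\pm k} : 0 \le k \le 3\}$, the commutation by the locally finite clause of Proposition~\ref{Prop:ultimate} (see below), and the inversion $s_0 g s_0^{-1} = g^{-1}$ via the identity $(s_0 g)^2 = s_0^2$ it implies, once $s_0^2 \in F$. After these exclusions, the surviving elements of $\Delta \cap s_0 \Delta$ beyond $s_0 g$ are exactly: $g^{-1}$ iff $g$ commutes with $s_0$, $g$ iff $s_0^2 = 1$, and $(s_0 g)^{-1}$ iff both. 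Summing, $\abs{\Delta \cap s_0 \Delta}$ equals $1, 2, 2, 4$ in the four cases, matching the increment demanded by~\ref{item:cases_Lemma92}.

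The remaining conditions follow from the same generic-avoidance game. Condition~\ref{ConditionAA} holds because $1 \in F$ forces $g^2, (s_0 g)^2 \neq 1$; condition~\ref{ConditionA} then holds with $\abs{\Delta} = 4$ once the finitely many coincidences among the four formal elements of $\Delta$ are excluded; condition~\ref{ConditionB} is a finite exclusion of $g$ absorbed into $A$; condition~\ref{ConditionD} amounts to $\Delta \cap sS = \Delta \cap s\Delta = S \cap s\Delta = \emptyset$ for each $s \in S \setminus \{s_0, s_0^{-1}\}$, another finite list of conditions on $g$; and condition~\ref{ConditionC} is verified by running the analogous expansion of $\delta S' \cap S'$ at each $\delta \in \Delta$, where the intersection generically has $\leq 2$ elements (the survivors being $s_0^{\pm 1} \in S$ and at most one translate in $\Delta$), well under the bound $6$.

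The main subtle point, and the reason the full strength of Proposition~\ref{Prop:ultimate} is needed, is to enforce $g \notin C_G(s_0)$ when $C_G(s_0)$ is locally finite -- otherwise the commutation relations would wrongly inflate the counts in Cases~1 and~2 up to those of Cases~3 and~4. This is handled cleanly by including $s_0$ itself in $F$: the locally finite branch of Proposition~\ref{Prop:ultimate} then directly guarantees $g^{-1} s_0 g \neq s_0$. In the non-locally-finite branch, the commutation $g^{-1} s_0 g = s_0$ is handed to us automatically. Thus a single construction and a single application of Proposition~\ref{Prop:ultimate} handles all four cases uniformly, the triangle count at $s_0$ being governed entirely by whether $g$ commutes with $s_0$ and whether $s_0$ is an involution.
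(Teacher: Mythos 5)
Your proposal is correct and follows essentially the same route as the paper: adjoin the symmetric quadruple $\Delta=\{g^{\pm1},(s_0g)^{\pm1}\}$ for a $g$ supplied by Proposition~\ref{Prop:ultimate} (the paper uses $s_0^{-1}g$ in place of $s_0g$, an immaterial change), enumerate the sixteen coincidence equations for $\Delta\cap s\Delta$, kill the spurious ones by putting $S\cup s_0^{\pm1}S\cup\{s_0^{\pm k}\}$ into $F$, and read off the four-case increment from whether $g$ centralizes $s_0$ and whether $s_0^2=1$. The only differences are cosmetic (the paper imposes $\abs{g}_S\geq 3$ and $\abs{s_0^{-1}g}_S\geq 3$ to dispose of conditions \ref{ConditionB}--\ref{ConditionD} uniformly, where you use ad hoc generic avoidance), plus some loose bookkeeping on your part --- e.g.\ the coincidence $(s_0g)^{-1}=s_0\cdot g^{-1}$ amounts to $g^{-1}s_0g=s_0^{-1}$ rather than to $s_0gs_0^{-1}=g^{-1}$, and a few exclusions in condition \ref{ConditionD} are cosets of $C_G(s_0)$ rather than finite sets, so they must be handled by the dichotomy of Proposition~\ref{Prop:ultimate} exactly as in your $s=s_0$ analysis --- but your final tally of surviving coincidences is the right one.
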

\begin{proof}
Let $g$ be an element of $G$ and $\Delta_g\coloneqq\{g,g^{-1},s_0^{-1}g,g^{-1}s_0\}$ .
We will show that there exists some $g$ in $G$ such that $S'=S'_g\coloneqq S\cup\Delta_g$ works.
Observe that for all $g$, the set $S'_g$ satisfies Condition \ref{ConditionA} of the lemma, and that it satisfies \ref{ConditionAA} if and only if $g^2 \neq 1$ and $(s_0^{-1}g)^2 \neq 1$, or equivalently $g \notin \sq^{-1}(1) \cup s_0 \sq^{-1}(1)$.

We first restrict our attention to elements $g$ such that the following two conditions hold
\begin{gather}
	\abs{g}_S\geq 3\label{Condition1}\\
	\abs{s_0^{-1}g}_S\geq 3\label{Condition2}
\end{gather}
where $\abs g_S$ is the word length of $g$ relative to the generating set $S$.

Since $S$ is finite, the number of $g\in G$ such that one of the conditions \eqref{Condition1}-\eqref{Condition2} do not hold is finite.
Also, for a $g$ satisfying Conditions \eqref{Condition1} and \eqref{Condition2} the intersection $\Delta_g\cap S$ is empty and Condition \ref{ConditionB} is automatically satisfied.
Moreover, in the Cayley graph of $G$ relative to $S'_g$, a triangle with a side labelled by  $s\in\Delta_g$ has at least another side  labelled by an element of $\Delta_g$, otherwise $s$ would have $S$-length at most $2$.
This implies that any edge labelled by $s\in\Delta_g$ belongs to at most $6$ triangles in $\unCayley{G}{S'}$, which is Condition \ref{ConditionC}.
Indeed, if one edge $e$ is labelled by $s\in\Delta_g$, there are at most three possibilities to put an edge labelled by $t\in \Delta_g\setminus\{s^{-1}\}$ at each extremity of $e$, thus giving a maximum number of $2\cdot 3=6$ triangles containing~$e$.
This also shows that for any $s\in S$ we have 
\[
	\Triangles{s}{S'_g}-\Triangles{s}{S}=\abs{\setst{t\in\Delta_g}{s^{-1}t \in \Delta_g}}=\abs{\Delta_g\cap s\Delta_g}
\]

We now turn our attention on the set $\Delta_g\cap s\Delta_g$.
Its cardinality is equal to the number of pairs $(u,v) \in \Delta_g$ such that $u=sv$.
By replacing $u$ and $v$ by the words $g,g^{-1},s_0^{-1}g$ and $g^{-1}s_0$, this gives us $16$ equations in the group.
Among these $16$ equations, $4$ imply that $s=1$.
The $12$ remaining equations for elements of $\Delta_g\cap s\Delta_g$ are shown in Table \ref{TableDeltag}.
\begin{table}
\[\arraycolsep=1.6pt\def\arraystretch{1.4}\begin{array}{|l||l|}
	\hline
	\textnormal{Possible elements of }\Delta_g\cap s\Delta_g & \textnormal{Occurs if}\\
	\hline\hline
	g=ss_0^{-1}g & s=s_0\\
	\hline
	s_0^{-1}g=sg & s=s_0^{-1}\\
	\hline
	g^{-1}s_0=sg^{-1} & s=g^{-1}s_0g\\
	\hline
	g^{-1}=sg^{-1}s_0 & s=g^{-1}s_0^{-1}g \\
	\hline
	g=sg^{-1} &\sq(g)=s\\
	\hline
	g^{-1}=sg &\sq(g)=s^{-1}\\
	\hline
	s_0^{-1}g=sg^{-1} &\sq(g)=s_0s\\
	\hline
	g^{-1}=ss_0^{-1}g &\sq(g)=s_0s^{-1}\\
	\hline
	g=sg^{-1}s_0 &\sq(s_0^{-1}g)=s_0^{-1}s\\
	\hline
	g^{-1}s_0=sg &\sq(s_0^{-1}g)=s_0^{-1}s^{-1}\\
	\hline
	s_0^{-1}g=sg^{-1}s_0 &\sq(s_0^{-1}g)=s\\
	\hline
	g^{-1}s_0=ss_0^{-1}g &\sq(s_0^{-1}g)=s^{-1}\\
	\hline
\end{array}\]
\caption{Possible elements of $\Delta_g\cap s\Delta_g$, where $\sq(g)= g^2$.}
\label{TableDeltag}
\end{table}

In particular, if $g$ is as in the conclusion of Proposition~\ref{Prop:ultimate} for $F = S \cup s_0 S \cup s_0^{-1} S$ and $s=s_0$, we see that only the first two lines in this Table occur if $C_G(s_0)$ is locally finite, and only the first four  occur otherwise. This implies Condition~\ref{ConditionD}. Also, Condition \ref{ConditionAA} holds in this case since $1$ belongs to $s_0^{-1}S\subseteq F$, and Condition \ref{item:cases_Lemma92} is automatically satisfied.
The fact that there are infinitely many $g$ in the conclusion of Proposition~\ref{Prop:ultimate} imply that we can find such $g$ satisfying also conditions \eqref{Condition1}-\eqref{Condition2}.
\end{proof}

We are now ready to prove Proposition~\ref{Prop:9.3}.
\begin{proof}[Proof of Proposition~\ref{Prop:9.3}]The proof will be by successive applications of Lemma \ref{Lemma:9.2}. To prove the proposition, it is enough that all elements of $S$ belong to at least $7$ $\tilde S$-triangles (to distinguish them from the newly added elements which will belong to at most $6$ $\tilde S$-triangles) and that the numbers $\Triangles{s^{\pm1}}{\tilde S}$ for $s$ in $S$ are all distinct.

Let $S_0 = S \cup S^{-1}$, so that $|S_0|\leq 2|S|$. Let $s_1,\dots,s_n$ be any enumeration of the elements of $S$. Apply successively Lemma \ref{Lemma:9.2} at most $7$ times with $s_1$, to get a set $S_1$ containing $S$ and such that $\Triangles{s_1}{S_1}$ is larger than $7$. Then, applying Lemma \ref{Lemma:9.2} for $s_2$ at most $8$ times, we can bring $\Triangles{s_2}{S_2}$ to another value $\geq 7$. Doing the same for each element of $S$, we finally obtain a set $\tilde S$ as in the lemma, after a total number of $\leq 7+8+ \dots + (\abs S+6)=\abs S(\abs S+13)/2$ successive applications of Lemma \ref{Lemma:9.2}. At the end, we have
\[ |\tilde S| \leq |S_0|+4 \frac{\abs S(\abs S+13)}{2} \leq 2\abs S(\abs S+14).\qed\]
\let\qed\relax
\end{proof}

\section{Proofs of the main results}\label{sec:conclusion}
We collect here for completeness the straightforward proofs of the results from the introduction.
\begin{proof}[Proof of Theorem~\ref{thm:main}] Let $G$ be as in Theorem~\ref{thm:main}, and $S_0$ be a finite generating set. Let $S_1 = (S_0 \cup S_0^2 \cup S_0^3)\setminus\{1\}$, and $S_2$ be the generating set $\tilde S$ given by Proposition~\ref{Prop:9.3} for $S=S_1$. By Proposition~\ref{Prop:9.3} and the discussion preceding it, every automorphism of $\unCayley{G}{S_2}$ preserves the $S_1$-colours: $\phi(gs) \in \{\phi(g) s,\phi(g) s^{-1}\}$ for every $g \in G$ and $s \in S_1$. By Theorem~\ref{orientation-rigid}, $\phi$ is a left-translation by an element of $G$. 
\end{proof}

\begin{proof}[Proof of Corollary~\ref{cor:main}] If $G$ is finite, the equivalence is the content of \cite{MR642043}. We can assume that $G$ is infinite and finitely generated. The implication \ref{item:locFiniteGRR} $\implies$ \ref{item:GRR} is obvious, and the implication \ref{item:GRR}$\implies$\ref{item:Gnonexceptionnal} is known and very easy, see~\cite{MR0280416}. For the reader's convenience, we recall the argument. If an infinite finitely generated group $G$ is either abelian or generalized dicyclic then there is a nontrivial permutation $\varphi$ of $G$ satisfying $\varphi(gh) \in \{\varphi(g)h,\varphi(g)h^{-1}$ for every $g,h \in G$: take for $\varphi$ the inverse map if $G$ is abelian, and the map that is the identity on $A$ and the inverse on $G \setminus A$ if $G$ is generalized dicyclic and $x,A$ are as in Definition~\ref{def:gen_dicyclic}). In particular, $\varphi$ induces an automorphism of every Cayley graph of $G$, different from a translation. Observe that this argument even rules out the existence of a non-locally finite \GRR{}.
  
We have to justify \ref{item:Gnonexceptionnal} $\implies$ \ref{item:locFiniteGRR}. If $G$ is not virtually abelian, then \ref{item:locFiniteGRR} is the conclusion of Theorem~\ref{thm:main}. Otherwise, $G$ admits an element of infinite order (a torsion abelian finitely generated group is finite), and \cite[Theorem 2]{LdlS2020} applies and proves~\ref{item:locFiniteGRR}.
\end{proof}
\begin{proof}[Proof of Corollary~\ref{cor:discrete_automorphism_group}] If $G$ is not virtually abelian, this is a particular case of Theorem~\ref{thm:main}. Otherwise, as explained in the proof of Corollary~\ref{cor:main}, $G$ has an element of infinite order and \cite[Theorem J]{delaSalleTessera} applies.
\end{proof}

\begin{proof}[Proof of Corollary~\ref{cor:LGrigid}]
  Combine Corollary~\ref{cor:discrete_automorphism_group} with \cite[Theorem E]{delaSalleTessera}.
\end{proof}

\section{A conjecture on the squares of a random walk}\label{sec:conjecture}

We mentionned before Lemma~\ref{lemma:prob_of_involution} that we expect that the following conjecture holds.
\begin{conj}\label{conj:proba_of_sq} Let $G$ be a finitely generated group that is not virtually abelian, $\mu$ be a symmetric probability measure on $G$ with finite and generating support containing the identity, and $(g_n)$ a realization of the random walk on $G$ given by $\mu$. Then
  \begin{equation}\label{eq:proba_of_square} \forall a \in G, \lim_n \mathbf{P}(g_n^2=a) = 0.\end{equation}
\end{conj}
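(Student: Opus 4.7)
The approach is to suppose for contradiction that $p := \liminf_n \mathbf{P}(g_n^2 = a) > 0$ for some $a \in G$, and deduce that $G$ is virtually abelian. I would proceed via two reductions followed by the core step.

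First I would reduce to the case where $a$ lies in the FC-center of $G$. The tautology $g^2 = a \Rightarrow g \in C_G(a)$ combined with~\eqref{eq:proba_of_coset} gives
\[ p \leq \limsup_n \mathbf{P}(g_n \in C_G(a)) = \frac{1}{|G:C_G(a)|}, \]
forcing $|G:C_G(a)| < \infty$. Hence the conjugacy class $a^G$ is finite.

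Second, I would reduce to the case $a = 1$ via the quotient $\bar G := G/N$, where $N := \langle a^G\rangle$ is a finitely generated normal subgroup of $G$. The projected walk $\bar g_n$ satisfies $\mathbf{P}(\bar g_n^2 = \bar 1) \geq p_n$, reducing the task to showing that $\liminf_n \mathbf{P}(\bar g_n^2 = 1) > 0$ forces $\bar G$ to be virtually abelian. When $a$ is torsion, Dicman's Lemma~\ref{lem:Dicman} ensures $N$ is finite; combined with Lemma~\ref{lemma:finite_by_abelian}, this lifts the virtual abelianness of $\bar G$ to that of $G$, contradicting the hypothesis. When $a$ has infinite order, $N$ is only central-by-finite (by B.H.~Neumann's theorem on finitely generated FC-groups), and a more delicate structural argument would be required; this step is nontrivial because Heisenberg-type groups are central-by-abelian yet not virtually abelian.

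The hard part will be the core case $a = 1$: strengthening Lemma~\ref{lemma:prob_of_involution} by removing the threshold $c > (\sqrt 5 - 1)/2$. The existing two-copy argument combines the bound $\mathbf{P}(g_n^2 = {g'_n}^2 = (g_n g'_n)^2 = 1) \geq c_n^2 - (1 - c_{2n})$ with the implication that $g^2 = h^2 = (gh)^2 = 1$ forces $[g,h] = 1$. A naive $m$-copy extension does not help: the joint probability that all $m$ independent copies are involutions is only $c^m$, while the union bound over the $\binom{m}{2}$ pair-commutation conditions contributes a term of order $\binom{m}{2}(1-c)$ that overwhelms $c^m$ as $m$ grows. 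Genuinely new input appears necessary, perhaps via analysis of the subgroup generated by involutions and its interaction with the random walk, an extension of Tointon's uniform index bound \cite[Theorem 1.11]{tointon} from cosets of subgroups to fibers of the squaring map, or a direct control of the commutation probability for a \emph{conditional} walk on the set of involutions. I do not see how to implement any of these at present, which is presumably why the statement remains a conjecture.
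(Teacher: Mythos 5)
The statement you are asked to prove is labelled a \emph{Conjecture} in the paper, and the paper contains no proof of it; the only thing the authors offer is the remark (in Section~\ref{sec:conjecture}) that the main case is $a=1$, and that the case $a=1$ implies the general case for $G$ not virtually $2$-nilpotent, by methods similar to the reduction to $F=\{1\}$ in the proof of Proposition~\ref{Prop:ultimate}. Your proposal is therefore not, and does not claim to be, a proof: the entire core of the argument --- showing that $\liminf_n \mathbf{P}(g_n^2=1)>0$ forces virtual abelianness without any threshold assumption --- is left open, exactly as it is in the paper. That is the genuine gap, and it is the gap that makes the statement a conjecture rather than a lemma.

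That said, the partial content you do supply is sound and lines up well with the paper's own discussion. The first reduction (from $g^2=a$ one gets $g\in C_G(a)$, so \eqref{eq:proba_of_coset} forces $|G:C_G(a)|<\infty$ and $a$ has finite conjugacy class) is correct, and the second reduction (quotient by $N=\langle a^G\rangle$, with Dicman's Lemma~\ref{lem:Dicman} and Lemma~\ref{lemma:finite_by_abelian} handling the torsion case, and a genuine obstruction when $a$ has infinite order since $N$ need then only be central-by-finite) is precisely why the paper restricts its reduction remark to groups that are not virtually $2$-nilpotent; Heisenberg-type examples are indeed the ones that block a naive lift. Your diagnosis of why the two-copy argument of Lemma~\ref{lemma:prob_of_involution} cannot be pushed below the threshold $\frac{\sqrt5-1}{2}$ by taking more copies (the union bound over $\binom{m}{2}$ pair conditions swamps the $c^m$ gain) is also accurate. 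In short: the reductions are correct and essentially the ones the authors have in mind, but no proof of the conjecture exists here or in the paper, and you are right to say that genuinely new input --- beyond \cite[Theorems 1.9 and 1.11]{tointon} --- appears to be required.
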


Better, there should be a function $f:(0,1] \to \mathbf{N}$ such that, in a group $G$, if
\[ \exists a \in G, \limsup_n \mathbf{P}(g_n^2=a) \geq \varepsilon,\]
then $G$ admits an abelian subgroup of index $\leq f(\varepsilon)$. This is known to be true for finite groups \cite{MR1242094,MR3899225}.

The main case of the conjecture is when $a=1$: indeed with similar methods as the reduction to $F=\{1\}$ in the proof of Proposition \ref{Prop:ultimate}, one can show that the case $a=1$ in Conjecture~\ref{conj:proba_of_sq} implies the full conjecture for $G$ not virtually $2$-nilpotent.

Let us mention here that this conjecture would allow to greatly simplify our proofs, as it would imply immediately the following variant of Proposition \ref{Prop:ultimate}, which also implies the main Theorem~\ref{thm:main} by the same argument.
\begin{lemm} If Conjecture~\ref{conj:proba_of_sq} holds for $G$, then for every $s \in G$ and $F \subset G$ finite there are infinitely many $g \in G \setminus (\sq^{-1}(F) \cup s \sq^{-1}(F))$ such that
\[\begin{cases}
g^{-1} s g \notin F&\textnormal{if $C_G(s)$ has infinite index in $G$}\\
g^{-1} s g = s&\textnormal{otherwise.}
\end{cases}\]
\end{lemm}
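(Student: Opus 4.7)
The plan is to prove the lemma via a direct random-walk argument that uses the assumed conjecture to control squares. Fix a symmetric, finitely supported, generating probability measure $\mu$ on $G$ with $\{e, s, s^{-1}\} \subseteq \mathrm{supp}(\mu)$ (for instance by enlarging any finite symmetric generating set of $G$), and let $(g_n)$ denote the associated random walk. Since Conjecture~\ref{conj:proba_of_sq} fails for every virtually abelian group, the assumption that it holds for $G$ forces $G$ to be infinite, hence every finite subset of $G$ has random-walk probability tending to zero. It therefore suffices to show that the set of good $g$ satisfying the stated conditions has positive asymptotic random-walk probability; such a set cannot be finite.

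By the conjecture, $\mathbf{P}(g_n^2 = a) \to 0$ for every $a \in G$, and summing over the finite set $F$ yields
\[\mathbf{P}(g_n \in \sq^{-1}(F)) = \sum_{a \in F}\mathbf{P}(g_n^2 = a) \to 0.\]
The key observation for the shifted fiber is the elementary convolution inequality
\[\mu^{*n}(s\sq^{-1}(F)) \leq \frac{\mu^{*(n+1)}(\sq^{-1}(F))}{\mu(s^{-1})},\]
obtained from $\mu^{*(n+1)}(A) = \sum_{y \in G}\mu(y)\mu^{*n}(y^{-1}A)$ with $A = \sq^{-1}(F)$ by retaining only the $y = s^{-1}$ term (a positive lower bound since $s^{-1}\in\mathrm{supp}(\mu)$). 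The right-hand side tends to zero by the preceding estimate, so $\mathbf{P}(g_n \in s\sq^{-1}(F)) \to 0$.

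For the conjugacy condition, the argument splits into two cases. If $C_G(s)$ has infinite index in $G$, then $\{g : g^{-1}sg \in F\}$ is a finite union of right cosets of $C_G(s)$ (at most one per element of $F$ conjugate to $s$), and each such coset has random-walk probability tending to zero by~\eqref{eq:proba_of_coset}; combined with the previous estimates this gives $\mathbf{P}(g_n \text{ is good}) \to 1$. If instead $C_G(s)$ has finite index, then $\mathbf{P}(g_n \in C_G(s)) \to 1/|G:C_G(s)| > 0$ by~\eqref{eq:proba_of_coset}, and combining with the first two estimates yields
\[\liminf_n\mathbf{P}(g_n \in C_G(s) \setminus (\sq^{-1}(F) \cup s\sq^{-1}(F))) \geq 1/|G:C_G(s)| > 0.\]
In both cases the good set has positive asymptotic random-walk probability and is therefore infinite.

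The only nontrivial ingredient beyond the conjecture itself is the convolution inequality, which reduces the control of the translated squaring fibers $s\sq^{-1}(F)$ to that of the ordinary fibers $\sq^{-1}(F)$ by arranging $s^{-1} \in \mathrm{supp}(\mu)$. This is precisely the shortcut that is unavailable without the conjecture, and explains why the proof is so much shorter than the proof of the analogous Proposition~\ref{Prop:ultimate}, which has to proceed through the much more delicate Lemma~\ref{lemma:Mikael2}.
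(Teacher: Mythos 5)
Your proof is correct and follows essentially the same route as the paper: control $\mathbf{P}(g_n\in\sq^{-1}(F))$ directly from the conjecture, transfer this to $s\sq^{-1}(F)$ by a convolution/shift estimate (the paper reaches $s^{-1}$ in $k$ steps of the walk rather than forcing $s^{-1}$ into the support of $\mu$, but the inequality is the same), and handle the conjugation condition via \eqref{eq:proba_of_coset} applied to cosets of $C_G(s)$. The argument and the resulting asymptotic probabilities ($1-o(1)$ in the infinite-index case, $1/|G:C_G(s)|-o(1)$ otherwise) match the paper's.
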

\begin{proof}
We prove the stronger fact that the probability that $g=g_n$ satisfies the conclusion of the lemma is $1-o(1)$ when $|G:C_G(s)|=\infty$, and $\frac{1}{|G:C_G(s)|} - o(1)$ otherwise.
  
  It follows from \eqref{eq:proba_of_square} that
  \[ \lim_n \mathbf{P}(g_n \in \sq^{-1}(F)) = 0.\]
It also implies that
\begin{equation}\label{eq:proba_of_square3} \lim_n \mathbf{P}(g_n \in s \sq^{-1}(F)) = 0.\end{equation}
To justify this, we need to introduce an independant copy $(g'_n)_{n \geq 0}$ of the random walk $(g_n)$. Since the support of $\mu$ is symmetric  and generates $G$, there is a $k$ such that $\mathbf{P}(g'_k=s^{-1})>0$. So using that $g_{n+k}$ is distributed as $g'_k g_n$, we obtain
\begin{align*} \mathbf{P}(g_{n+k} \in \sq^{-1}(F)) &\geq \mathbf{P}(s^{-1}g_n \in \sq^{-1}(F) \textrm{ and }g'_k = s^{-1}) \\
  & = \mathbf{P}(g_n \in s \sq^{-1}(F)) \mathbf{P}(g'_k = s^{-1}).
\end{align*}
This proves \eqref{eq:proba_of_square3}.
Moreover, it follows from \eqref{eq:proba_of_coset} that
\[\begin{cases}
\lim_n\mathbf{P}(g_n^{-1} s g_n \notin F) = 1&\textnormal{if $|G:C_G(s)|=\infty$}\\
\lim_n\mathbf{P}(g_n^{-1} s g_n =s) = \frac{1}{|G:C_G(s)|}&\textnormal{otherwise.}
\end{cases}\]
The conclusion follows.
\end{proof}

\section{Directed and oriented graphs}\label{sec:directed}
A natural variation of Cayley graphs is the concept of \emph{Cayley digraph} (directed graph). 
Given a group $G$ and a (not necessarily symmetric) generating set $S\subseteq G\setminus\{1\}$, the Cayley digraph $\orCayley{G}{S}$ is the digraph with vertex set $G$ and with an arc (directed edge) from $g$ to $h$ if and only if $g^{-1}h\in S$.

A Cayley digraph $\orCayley{G}{S}$ of $G$ whose automorphism group acts freely on its vertex set is called a \emph{digraphical regular representation}, or \DRR.
If moreover $\orCayley{G}{S}$ has no bigons (that is if $S\cap S^{-1}=\emptyset$) then we speak of an \emph{oriented graphical regular representation}, or \ORR.

We have the directed equivalent of Corollary~\ref{cor:main}:

\begin{prop}\label{prop:DRR} For a finitely generated group $G$, the following are equivalent:
  \begin{enumerate}
  \item\label{item:DRR} $G$ admits a \DRR,
  \item\label{item:locFiniteDRR} $G$ admits a finite degree \DRR,
  \item\label{item:GnonDexceptionnal} $G$ is neither the quaternion group $Q_8$, not any of $(\Z/2\Z)^2$, $(\Z/2\Z)^3$, $(\Z/2\Z)^4$, $(\Z/3\Z)^2$.
  \end{enumerate}
\end{prop}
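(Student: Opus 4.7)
The plan is to establish the three implications separately, using Corollary~\ref{cor:main} as the main input.

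The implication $\ref{item:locFiniteDRR}\Rightarrow\ref{item:DRR}$ is trivial, and $\ref{item:DRR}\Rightarrow\ref{item:GnonDexceptionnal}$ is classical: these five finite groups are known to admit no \DRR{} at all (Babai \cite{MR0280416} and related work). So the substance of the proposition is the converse direction $\ref{item:GnonDexceptionnal}\Rightarrow\ref{item:locFiniteDRR}$, which I would prove in three steps according to the class of $G$.

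For finite groups outside the exceptional list, I would invoke the classical Babai--Imrich classification of finite groups admitting a \DRR. So from now on assume $G$ is infinite and finitely generated. The key observation is that \emph{any finite-degree \GRR{} is automatically a finite-degree \DRR}: if $S = S^{-1}$ is a finite symmetric generating set and $\unCayley{G}{S}$ is a \GRR, then the oriented Cayley graph $\orCayley{G}{S}$ has the same automorphism group, because when $S$ is symmetric the condition $g^{-1}h\in S$ is equivalent to $h^{-1}g\in S$, so each graph structure is recoverable from the other. Combining this with Corollary~\ref{cor:main}, every infinite finitely generated $G$ that is neither abelian nor generalized dicyclic already admits a finite-degree \DRR.

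It remains to construct finite-degree \DRR{}s for the two infinite families not covered by Corollary~\ref{cor:main}: infinite finitely generated abelian groups, and infinite generalized dicyclic groups. In both cases a symmetric generating set cannot work, since inversion (respectively the involution from Definition~\ref{def:gen_dicyclic}) induces a non-translation automorphism of the resulting digraph. The strategy is to use an \emph{asymmetric} generating set $S$ with $S\cap S^{-1}$ as small as possible, and verify directly that $\orCayley{G}{S}$ admits no non-translation automorphism. For an infinite finitely generated abelian group $G\cong \Z^r\times T$ with $r\geq 1$, one picks a generating set containing elements of infinite order without their inverses, together with a few extra elements to enforce rigidity; this is classical. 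For infinite generalized dicyclic groups, one exploits an element of infinite order inside the index-two abelian subgroup $A$ to carry out an analogous construction. The main obstacle is this final step, but it is considerably easier than its \GRR{} analogue thanks to the flexibility gained by dropping the symmetry requirement on~$S$.
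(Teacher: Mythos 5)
Your reduction of the main case is a genuinely different (and valid) route: for an infinite finitely generated $G$ that is neither abelian nor generalized dicyclic, Corollary~\ref{cor:main} gives a finite symmetric $S$ with $\unCayley{G}{S}$ a \GRR, and since $S=S^{-1}$ the arc relation of $\orCayley{G}{S}$ is symmetric, so the two automorphism groups coincide and $\orCayley{G}{S}$ is a finite degree \DRR. That observation correctly disposes of most infinite groups without re-running any machinery. (Minor point: the implication \ref{item:DRR}$\Rightarrow$\ref{item:GnonDexceptionnal} is simply vacuous for infinite $G$, since all five exceptional groups are finite; and the relevant reference for finite \DRR{}s is Babai \cite{MR603394}, not \cite{MR0280416}.)

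The genuine gap is your last step. For infinite finitely generated abelian groups and infinite generalized dicyclic groups you assert that a finite asymmetric generating set yielding a \DRR{} can be produced because ``this is classical,'' but it is not: Babai's theorem that every infinite group admits a \DRR{} \cite{MR0498225} uses generating sets with no control on the degree, and the finite degree statement is exactly the new content of item \ref{item:locFiniteDRR}. These two families are precisely where Corollary~\ref{cor:main} gives you nothing, and ``pick elements of infinite order without their inverses plus a few extra elements to enforce rigidity'' is a plan, not a proof --- verifying that a proposed $\orCayley{G}{S}$ has no non-translation automorphisms is the entire difficulty, and it is not easier for $\Z^r\times T$ with $T$ an arbitrary finite abelian group (recall that even the classification of \emph{finite} abelian groups admitting a \DRR{} is a nontrivial theorem). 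The paper avoids any case split: Proposition~\ref{Prop:9.3} (for $G$ not virtually abelian) or \cite[Lemma~32]{LdlS2020} (for $G$ with an element of infinite order, which covers every infinite finitely generated virtually abelian group) produces $\tilde S$ in which triangle counts pin down the colour of each edge, and then \cite[Lemma~5]{LdlS2020} and \cite[Proposition~6]{LdlS2020} extract an asymmetric $T\subseteq\tilde S$ making $\orCayley{G}{T}$ a \DRR; no orientation-rigidity is needed because the digraph already records orientations, which is exactly why abelian and dicyclic groups cause no trouble on that route. To complete your argument you would either have to supply the explicit constructions for these two families with full verification, or fall back on the paper's uniform argument for them.
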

\begin{proof}
If $G$ is finite, the equivalence is the content of \cite{MR603394}. We can assume that $G$ is infinite and finitely generated. The implication \ref{item:locFiniteDRR} $\implies$ \ref{item:DRR} is obvious, and the implication \ref{item:DRR}$\implies$\ref{item:GnonDexceptionnal} is empty for infinite groups.
We have to justify \ref{item:GnonDexceptionnal} $\implies$ \ref{item:locFiniteDRR}.

Let $S$ be a finite generating set of $G$.
Using Proposition~\ref{Prop:9.3} and \cite[Lemma~32]{LdlS2020} we obtain a finite generating set $S\subseteq \tilde S\subset G$ such that for all $s\in S$ and $t\in\tilde S$, if $\Triangles{s}{\tilde S}=\Triangles{t}{\tilde S}$ then $t=s$ or $t=s^{-1}$.
By \cite[Lemma 5]{LdlS2020} and \cite[Proposition 6]{LdlS2020} there exists a generating set $T\subseteq\tilde S$ such that $\orCayley{G}{T}$ is a \DRR. Moreover, $T\cap T^{-1}$ consist only of elements of order $2$.
\end{proof}

Observe that the equivalence of \ref{item:DRR} and \ref{item:GnonDexceptionnal} was the content of \cite{MR0498225,MR603394}.

We will conclude with the oriented equivalent of Corollary~\ref{cor:main} and thus answer \cite[Problem 2.7]{MR603394}.
Recall that a generalized dihedral group $G$ is the semi-direct product $A\rtimes \Z/2\Z$ where $A$ is an abelian group and $\Z/2\Z$ acts on $A$ by inversion.

\begin{prop} For a finitely generated group $G$, the following are equivalent:
\begin{enumerate}
  \item\label{item:ORR} $G$ admits an \ORR,
  \item\label{item:locFiniteORR} $G$ admits a finite degree \ORR,
  \item\label{item:GnonOexceptionnal} $G$ does not belong to the following list:
  \begin{itemize}
    \item the non-trivial generalized dihedral groups,
    \item the following $11$ finite groups of cardinality at most $64$: $Q_8$, $\Z/4\Z\times\Z/2\Z$, $\Z/4\Z\times(\Z/2\Z)^2$, $\Z/4\Z\times(\Z/2\Z)^3$, $\Z/4\Z\times(\Z/2\Z)^4$, $(\Z/3\Z)^2$, $\Z/3\Z\times(\Z/2\Z)^3$, $D_4 \circ D_4$ (the central product of two dihedral groups of order $8$, which has order $32$) and the three groups (of respective orders $16$, $16$ and $32$) given by the presentations
    \[ \presentation{a,b}{a^4=b^4=(ab)^2=(ab^{-1})^2=1},\]
    \begin{gather*} \langle a,b,c \,|\, a^4=b^4=c^4=(ba)^2=(ba^{-1})^2=(bc)^2=(bc^{-1})^2=1\\
    a^2c^{-2}=a^2b^{-2}=cac^{-1}a^{-1}=1\rangle,
    \end{gather*}
    \begin{gather*} \langle a,b,c\,|\, a^4=b^4=c^4=(ab)^2=(ab^{-1})^2=(ac)^2=(ac^{-1})^2=1\\
    (bc)^2=(bc^{-1})^2=a^2b^2c^2=1\rangle.
    \end{gather*}
  \end{itemize}
\end{enumerate}
\end{prop}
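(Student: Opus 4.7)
I plan to follow the strategy of the preceding DRR proposition, adding the extra constraint of avoiding elements of order~$2$. For finite $G$, the equivalence reduces to the classification of finite groups admitting ORRs, which I would cite. So I restrict attention to $G$ infinite and finitely generated, in which case the exception list reduces to the non-trivial generalized dihedral groups, and the implication $(b) \Rightarrow (a)$ is immediate. For $(a) \Rightarrow (c)$, I would observe that if $G = A \rtimes \Z/2\Z$ is a non-trivial generalized dihedral group with abelian index-$2$ subgroup $A$, then every element $ta$ of $G \setminus A$ satisfies $(ta)^2 = t a t a = t (a t) a = t (t a^{-1}) a = 1$, using that conjugation by $t$ inverts $A$. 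Since any generating set of $G$ must contain an element of $G \setminus A$ in order to generate $G/A \cong \Z/2\Z$, it must contain an involution, hence $S \cap S^{-1} \neq \emptyset$ and no ORR exists.

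For $(c) \Rightarrow (b)$, the key auxiliary claim is that if $G$ is infinite, finitely generated, and not a non-trivial generalized dihedral group, then $G$ admits a finite generating set of non-involutions. I would prove this by letting $N$ be the subgroup generated by non-involutions (normal, since non-involutions form a conjugation-invariant set) and noting that $G/N$ is an elementary abelian $2$-group: indeed, every element of $G \setminus N$ is an involution, otherwise it would lie in $N$. If $[G:N] = 1$ we are done; if $[G:N] = 2$, picking any involution $t \in G \setminus N$ one computes $tat^{-1} = a^{-1}$ for all $a \in N$, so $N$ is abelian and $G$ is generalized dihedral, contradicting the hypothesis; and if $[G:N] \geq 4$, taking two distinct non-trivial cosets $xN, yN$, the elements $x$, $y$, $xy$ are all involutions and they commute pairwise. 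An analogous computation shows that both $x$ and $y$ invert $N$, so $xy$ acts trivially on $N$, while for every $a \in N$ the element $(xy) a$ is again an involution, forcing $a = a^{-1}$. Thus $N$ has exponent $2$, so $G$ is a finitely generated elementary abelian $2$-group, hence finite, contradicting infiniteness.

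With such a generating set $S$ of non-involutions in hand, I would then mimic the DRR construction: apply Proposition~\ref{Prop:9.3}, whose \emph{moreover} clause ensures $\tilde S \setminus S$ has no elements of order~$2$, so that $\tilde S$ is entirely free of involutions; then invoke \cite[Lemma~5 and Proposition~6]{LdlS2020} to extract $T \subseteq \tilde S$ for which $\orCayley{G}{T}$ is a \DRR. Since $T \subseteq \tilde S$ contains no involutions, automatically $T \cap T^{-1} = \emptyset$ and $\orCayley{G}{T}$ is in fact an \ORR. The main obstacle I anticipate is verifying that every auxiliary tool used to pass from $S$ to $\tilde S$ (notably \cite[Lemma~32]{LdlS2020}, invoked alongside Proposition~\ref{Prop:9.3} in the DRR proof) preserves the absence of involutions; this should follow from a careful reading or a minor strengthening of those statements parallel to the \emph{moreover} clause of Proposition~\ref{Prop:9.3}.
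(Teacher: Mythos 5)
Your proof is correct and follows the same skeleton as the paper's: the finite case is delegated to the classification of Morris--Spiga, the implication from admitting an \ORR{} to not being generalized dihedral is the same observation that every element outside $A$ is an involution, and the converse runs through Proposition~\ref{Prop:9.3} together with \cite[Lemma~32]{LdlS2020} and then \cite[Lemma 5, Proposition 6]{LdlS2020}, exactly as in the \DRR{} case. The one place you diverge is the auxiliary claim that an infinite finitely generated group which is not generalized dihedral has a finite generating set without involutions: the paper simply cites \cite[Proposition 5.2]{MR0498225} (Babai), whereas you reprove it via the normal subgroup $N$ generated by the non-involutions and the case analysis on $\abs{G:N}$; that argument is correct and is a clean self-contained substitute for the citation. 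Your closing caveat about checking that \cite[Lemma~32]{LdlS2020} does not introduce involutions is exactly the point the paper also asserts without further detail, so nothing is missing relative to the published proof.
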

\begin{proof}
If $G$ is finite, the equivalence is the content of \cite{MR3873496}, while every generating set of a generalized dihedral group contains an element of order $2$ (namely any element not in $A$). 
Once again, we have to justify \ref{item:GnonOexceptionnal} $\implies$ \ref{item:locFiniteORR} for $G$ infinite.

Let $G$ be a finitely generated group which is not generalized dihedral.
%Then by \cite[Lemma 2.6]{MR3667669} there exists a finite generating set $S$ of $G$ without elements of order~$2$.
Then by \cite[Proposition 5.2]{MR0498225} there exists a finite generating set $S$ of $G$ without elements of order~$2$.
Then the generating set $\tilde S$ given by Proposition~\ref{Prop:9.3} and \cite[Lemma~32]{LdlS2020} has also no elements of order $2$.
This implies that for $T$ given by \cite[Lemma 5]{LdlS2020} and \cite[Proposition 6]{LdlS2020} the \DRR{} $\orCayley{G}{T}$ is actually an \ORR. 
\end{proof}

%Bibliography %%%%%%%%%%%%%%%%%%%%
\bibliographystyle{plain}
\bibliography{../Biblio}
\providecommand{\noopsort}[1]{} \def\cprime{$'$}

\end{document}